\documentclass[11pt]{amsart}
\usepackage{amsfonts, amsmath, amssymb, color}
\usepackage{etex}   
\usepackage{amsthm}
\usepackage{booktabs}
\usepackage{hhline}
\usepackage{lipsum}
\usepackage{lscape}
\usepackage{subfig}
\usepackage{textcomp}
\usepackage{tikz}
\usetikzlibrary{decorations.pathmorphing,shapes,arrows,positioning}
\usepackage[all]{xy}
\usepackage{xcolor}
\usepackage{young}
\usepackage{youngtab}
\usepackage{ytableau}
\usepackage{hyperref}
\hypersetup{colorlinks,linkcolor=blue,urlcolor=cyan,citecolor=blue}

\allowdisplaybreaks[4]
\frenchspacing

\theoremstyle{plain}
\pagestyle{plain} \oddsidemargin 0pt \evensidemargin 0pt
\textheight=225 mm \textwidth=170 mm

\voffset=-1.00 true cm
\newtheorem{thm}{Theorem}[section]
\newtheorem{lem}[thm]{Lemma}
\newtheorem{prop}[thm]{Proposition}
\newtheorem{cor}[thm]{Corollary}
\newtheorem{rem}[thm]{Remark}

\theoremstyle{definition}

\newcommand{\la}{\lambda}
\newcommand{\brho}{\boldsymbol{\rho}}

\newcommand{\bl}{\boldsymbol{\lambda}}
\newcommand{\bmu}{\boldsymbol{\mu}}
\newcommand{\bnu}{\boldsymbol{\nu}}

\numberwithin{equation}{section} \errorcontextlines=0

\newcommand{\GL}{\mathrm{GL}}

\begin{document}
\title{Characters of $\GL_n(\mathbb F_q)$ and vertex operators}
\author{Naihuan Jing}
\address{School of Mathematics, South China University of Technology, Guangzhou, Guangdong 510640, China}
\address{Department of Mathematics, North Carolina State University, Raleigh, NC 27695, USA}
\email{jing@ncsu.edu}
\author{Yu Wu}
\address{School of Mathematics, South China University of Technology,
Guangzhou, Guangdong 510640, China}
\email{wywymath@163.com}
\subjclass[2010]{Primary: 20C33, 17B69; Secondary: 05E10}\keywords{Finite general linear groups, characters,
vertex operators, Hall-Littlewood functions, Schur functions}
\thanks{The paper is partially supported by
NSFC grant No. 12171303.}

\maketitle
\begin{abstract}
In this paper, we present a vertex operator approach to construct and compute all complex irreducible characters of the general linear group
$\GL_n(\mathbb F_q)$. Green's theory of $\GL_n(\mathbb F_q)$ is recovered and enhanced under the realization of the Grothendieck ring of representations
$R_G=\bigoplus_{n\geq 0}R(\GL_n(\mathbb F_q))$ as two isomorphic Fock spaces associated to two infinite-dimensional $F$-equivariant Heisenberg Lie algebras $\widehat{\mathfrak{h}}_{\hat{\overline{\mathbb F}}_q}$ and $\widehat{\mathfrak{h}}_{\overline{\mathbb F}_q}$, where $F$ is the Frobenius automorphism of
the algebraically closed field
$\overline{\mathbb F}_q$. Under this picture, the irreducible characters are realized by the Bernstein vertex operators for
Schur functions, the characteristic functions of the conjugacy classes are realized by the vertex operators
for the Hall-Littlewood functions,
and the character table is completely given by matrix coefficients of vertex operators of these two types. One of the features of the current approach is a simpler identification of the
Fock space $R_G$ as the Hall algebra of symmetric functions via vertex operator calculus, and another is that we are able to
compute in general the character table, where Green's degree formula is demonstrated as an example.
\end{abstract}

\section{Introduction}

Let $\GL_n(\mathbb F_q)$ be the general linear group of $n$ by $n$ matrices over the finite field $\mathbb F_q$. In a remarkable paper \cite{jaG}  Green
determined all irreducible characters of $\GL_n(\mathbb F_q)$ by parabolic induction, generalizing Frobenius' theory of the symmetric groups.
Green's theory has greatly influenced the developments of representation theory and algebraic combinatorics.
Zelevinsky used a Hopf algebra approach \cite{Ze} to study representations of $\GL_n(\mathbb F_q)$ and Springer and Zelevinsky \cite{SZ} further developed the approach on unipotent characters.
The Green polynomials have been generalized to finite Chevalley groups. Deligne and Lusztig \cite{DL} generalized Green's parabolic induction to construct linear representations of finite groups of Lie types using $\ell$-adic cohomology
(cf.\cite{C}) and later Lusztig found all representations of finite simple groups of Lie types \cite{L, GM}.

In algebraic combinatorics, Green's theory showed the importance of then a new family of symmetric functions--Hall-Littlewood functions.
 Macdonald has written the famous book \cite{Mac} to account Green's identification of the Hall algebra with the ring of symmetric functions (for a survey of Green's theory and cusp forms, see \cite{S}).
 Green's idea has also played an important role in understanding of quantum groups via Hall's algebras \cite{Green2}.

In this paper, we present a new method to construct and compute all complex irreducible characters of $\GL_n(\mathbb F_q)$ using representation theory of infinite dimensional Lie algebras,
which follows the general strategy of using vertex algebras
to reconstruct Frobenius' theory of the symmetric group $\mathfrak S_n$ \cite{Jing1}, Schur's theory of the spin group $\tilde{\mathfrak S}_n$ \cite{Jing1}
and Specht's theory of the wreath product $G\wr \mathfrak S_n$ \cite{FJW} etc.. The corresponding
vertex operators are homogeneous vertex representations, twisted vertex representations of
the affine Lie algebra $\widehat{\mathfrak sl}_2$ and the affine Lie algebra $\widehat{\mathfrak gl}_n$.
In the current work we will introduce $\Phi^*$-dependent vertex operators
and  $\Phi$-dependent vertex operators, where $\Phi$ and $\Phi^*$ are the $F$-orbits of the multiplicative group $M_n=\mathbb F_{q^n}^{\times}$ of the Galois field $\mathbb F_{q^n}$ and the character group $M^*_n=\hat{\mathbb F}_{q^n}^{\times}$, in some sense it is the generalized affine Lie algebras $\widehat{gl}_{\infty}\rtimes \overline{\mathbb F}_q^{\times} $ and
$\widehat{gl}_{\infty}\rtimes \hat{\overline{\mathbb F}}_q^{\times}$, where $\overline{\mathbb F}_q^{\times}$ is the multiplicative group of the algebraic closure $\overline{\mathbb F}_q$ and $\hat{\overline{\mathbb F}}_q^{\times}$ its dual group.

Our work can be illustrated by the following flow chart:

\begin{gather*}
  \centering
\begin{tikzpicture}[node distance = 2.6cm]
 \tikzset{
 base/.style={draw = black,minimum width=0.25\columnwidth,minimum height = 1cm},
 startstop/.style = {base,rectangle},
 process/.style={base,rectangle},
 io/.style={base,rectangle},
 arrow/.style ={->,thick},
 }
 \node(Grot)[startstop]{Grothendieck ring $R_G$};
 \node(HA)[io,below of=Grot]{Hall algebra $\bigotimes_{f\in\Phi}H(\mathbb F_q[t]_{(f)})$};
 \node(Lambda)[process,below of =HA]{$\bigotimes_{f\in\Phi}\Lambda_{\mathbb{C}}(f)$};
\node(Green)[process,below of =Lambda]{Green polynomials $\textcircled{4}$};
\node(conj)[process, right of=HA, xshift=3cm]{2nd Fock space for $\widehat{\mathfrak h}_{\overline{\mathbb F}_q}$};
\node(char)[process, left of =HA,xshift=-3cm]{1st Fock space for $\widehat{\mathfrak h}_{\hat{\overline{\mathbb F}}_q}$};
\node(Schur)[process,below of =char]{vertex operators/S-functions};
\node(HL)[process,below of =conj]{vertex operators/HL-functions};
\draw[arrow](Grot)--node[right]{$\simeq$}node[left]{$\textcircled{1}$}(HA);
\draw[arrow](HA)--node[right]{characteristic map}node[left]{$\textcircled{2}$}(Lambda);
\draw[->](Green)--(HL);
\draw[->](Green)--(Schur);
\draw[->](Grot)--(conj);
\draw[->](Grot)--(char);
\draw[dashed,->](Schur)--node[right]{$\textcircled{3}$proper characters}node[left]{$\Phi^*$-colored partitions}(char);
\draw[dashed,->](conj)--node[left]{$\Phi$-colored }node[right]{partitions}(HL);
\draw[dashed,->](HL)--node[above]{base}(Lambda);
\draw[dashed,->](Schur)--node[above]{base}(Lambda);
\end{tikzpicture}
\end{gather*}

Let us explain our current work in details.
Let $R(G_n)$ be the
Hilbert space of complex class functions of $\GL_n(\mathbb F_q)$ equipped with the
canonical inner product. The conjugacy classes are parameterized by
weighted partition-valued functions on the set $\Phi$ of the $F$-orbits of $\overline{\mathbb F}^{\times}_q$, namely the conjugacy classes are parameterized by $\Phi$-colored weighted partitions ${\bl}=({\bl}(f))_{f\in\Phi}\subset \mathcal P$
such that
\begin{equation}
||{\bl}||=\sum_{f\in\Phi}d(f)|\bl(f)|=n.
\end{equation}
For each such weighted $\Phi$-colored partition
$\bl: \Phi\mapsto \mathcal P$, let $\pi_{\bl}$ be the characteristic
function at the conjugacy class $c_{\bl}$. Then $\{\pi_{\bl}\}$ form an orthogonal basis of $R(G_n)$ under the canonical inner product.

We introduce the infinite-dimensional graded space
\begin{equation}
R_G=\bigoplus_{n=0}^{\infty}R(G_n)
\end{equation}
as the ring
equipped with the multiplication given by $f\cdot g=\mathrm{Ind}_{P_{(m, n)}}^{G_{m+n}} f\otimes g$ for any two homogeneous class functions
$f\in R(G_m), g\in R(G_n)$. Here $P_{(m, n)}$ is a parabolic subgroup of $\GL_n(\mathbb F_q)$.
We first realize $R_G$ as the Fock space $\mathcal{F}_G$ of the infinite dimensional $F$-equivariant Heisenberg algebra $\widehat{\mathfrak{h}}_{\hat{\overline{\mathbb F}}_q}$ generated by
$p_n(\phi)$ $(n\neq 0, \phi\in\Phi^*)$  and the central element $C$ subject to the relations
\begin{align}
[p_m(\phi), p_n(\phi')]=m\delta_{m, -n}\delta_{\phi, \phi'}C
\end{align}
which equips the Fock space with the canonical inner product $\langle p_{\lambda}(\phi), p_{\mu}(\phi')\rangle=z_{\lambda}\delta_{\phi, \phi'}$
consistent with the canonical inner product from $R_G$.
We use the vertex operator realization of Schur functions to construct an orthonormal basis of the Fock space $\mathcal{F}_G$ indexed by
$\Phi^*$-colored weighted partitions $\tilde{\bl}: \Phi^*\longrightarrow\mathcal P$ such that $||\tilde{\bl}||=n$.
For each $\phi\in\Phi^*$, we define the Bernstein vertex operators for Schur functions (cf. \cite{Ze, Jing3})
\begin{equation}
S(\phi; z)=:\exp(\sum_{n\neq 0}\frac{1}{n}p_n(\phi)z^{n}):=\sum_{n\in\mathbb Z} S_n(\phi)z^n,
\end{equation}
where $:\ \ :$ denotes the normal order (see \eqref{e:Schur1}).
By the vertex operator realization of the Schur functions \cite{Jing2}, we have an orthonormal basis $S_{\tilde{\bl}}:=\prod_{f\in\Phi^*}S_{\tilde{\bl}(f)}.1$, which are certain products of the vertex operators $S_n(f)'s$ (cf. \cite{Jing3}).

Under the realization of $R_G$ as the tensor product of the ring of symmetric functions over $\widehat{\mathfrak{h}}_{\hat{\overline{\mathbb F}}_q}$, we can use the discrete Fourier transform to change bases from the
product of the power-sums $p_n(\phi)'s$ to that of $p_n(f)'s $, where $f$ is an $F$-orbit of $\mathbb F_{q^n}^{\times}$.
We show that the Fock space $R_G$ is also identified with the (2nd) Fock space of the infinite-dimensional
  $F$-equivariant Heisenberg algebra
  $\widehat{\mathfrak{h}}_{\overline{\mathbb F}_q}$ generated by
  the $p_n(f)$ ($n\neq 0, f\in\Phi$) and the central element $C$ subject to
 \begin{align}
[p_m(f), p_n(f')]=\frac{m}{q^{d(f)m}-1}\delta_{m, -n}\delta_{f, f'}C, \qquad\quad (m>0)
\end{align}

We then recall the vertex operator for Hall-Littlewood functions introduced in \cite{Jing2}: for each $f\in\Phi^*$
\begin{equation}
Q(f; z)=:\exp(\sum_{n\neq 0}\frac{q_f^n-1}{n}p_n(f)z^{n}):=\sum_{n\in\mathbb Z} Q_n(f)z^n,
\end{equation}
where $: \ \ :$ is the normal order and the exact formula of $Q(f; z)$ will be given in \eqref{e:VO1}.
We use the vertex operator approach to show that $\Lambda_{\mathbb C}$ obey the multiplication of the Hall algebra
and the Fock space $R_G$ is ring isomorphic to the Hall algebra under the
characteristic map $ch$ defined by:
\begin{equation}
ch(\pi_{\bl})=c(\bl)Q_{\bl},
\end{equation}
where $c(\bl)$ is a constant in
$q$ (see \eqref{e:chb}) and $Q_{\bl}$ is certain product of the vertex operators $Q_n(f)'s$. This isomorphism also establishes that $Q_{\lambda}(f)$ corresponds to
the Hall-Littlewood symmetric functions associated with $\lambda$ in the $p_n(f)$, which is an independent proof of this important result using vertex operators. We remark that Macdonald's description
of the Hall algebra connection occupied
three chapters in \cite{Mac} for
this part of Green's theory. We also note that
our original identification of $Q_{\lambda}(f)$ by the first author \cite{Jing2} using the vertex operators had used
Littlewood's raising formula of the Hall-Littlewood function. In this regard, our current approach provides a simpler and quicker proof of this
identification of the Hall algebra based on vertex operators.

According to the two identifications of the representation ring $R_G$ as both the Fock spaces spanned by bases indexed by $\Phi^*$-colored weighted partitions and $\Phi$-colored weighted partitions,
the orthonormal set $\{S_{\tilde{\bl}}\}$ should provide the complete set of irreducible characters provided that we verify that $ch^{-1}(S_{\tilde{\bl}})$ are proper characters. By Brauer's theorem of modular characters \cite{B}, the set $\{S_{\tilde{\bl}}\}$ correspond to virtual characters, thus
$\{\pm S_{\tilde{\bl}}\}$ provide the complete list of irreducible characters. We finally use vertex operators to compute the degree of the virtual character $ch^{-1}(S_{\tilde{\bl}})$
to settle the problem to determine all irreducible characters.
We also obtain a general formula of the irreducible characters which reduces the computation to
characters of the symmetric groups $\mathfrak S_m$ and Green's polynomials.

Recently, Liu and one of us \cite{JL} have used the vertex operator to derive an iterative formula of  the Green polynomials for $\GL_n(\mathbb F_q)$. Thus
vertex operators provide a practical
algorithm to compute the irreducible characters.

This article is divided into five sections.
After introduction, we review
colored partitions and construct tensor products of the ring of symmetric functions in the second section. In the third section, we first review Green's parametrization of conjugacy
classes of $\GL_n(\mathbb F_q)$ using the
Frobenius automorphism $F$, then we construct two Heisenberg algebras associated to $F$-orbits of the algebraically closed field $\overline{\mathbb F}_q$ and its character group. In section four, we realize the Grothendieck ring $R_G$ in terms of two versions of the Fock space equivariant under the Frobenius map $F$.
We introduce accordingly two sets of vertex operators
indexed by $F$-orbits in $\hat{\overline{\mathbb F}}_q$
and $F$-orbits in $\overline{\mathbb F}_q$. We then compute the commutators of two sets of Heisenberg generators.
In sections five we discuss how to compute all irreducible characters of $\GL_n(\mathbb F_q)$
using vertex operator. In particular, we give an alternative derivation of Green's degree formulas for all irreducible characters.

\section{Colored partitions and Hall algebra}

We first recall some basic notions about partition-valued functions or colored partitions following \cite{Mac} or \cite{St}.
A {\it partition} $\lambda$ is a finite sequence of descending nonnegative integers $(\lambda_1,\lambda_2,\ldots)$ such that $\lambda_1\geq\lambda_2\geq\ldots$, where
nonzero $\lambda_i$ are the {\it parts} of $\lambda$. The  {\it length} $l(\lambda)$ is the number of parts, and the {\it weight} $|\lambda|=\sum_{i}\lambda_i$, where We usually denote $\lambda\vdash |\lambda|$. By $\lambda'$ we mean the conjugate partition of $\la$ with parts $\la_i'=|\{j| \la_j\geq i\}|$. When the parts are not necessarily descending, $\lambda$ is called a {\it composition} of $n$,
denoted as $\lambda\vDash n$.
The set of partitions (resp. compositions) will be denoted as $\mathcal P$ (resp. $\mathcal C$).
Sometimes a partition $\lambda$ is arranged in the ascending order: $\lambda=(1^{m_1(\la)}2^{m_2(\la)}\ldots)$ with $m_i(\la)$ being the multiplicity of part $i$ in $\lambda$.
For $\lambda\in\mathcal P$ we will need the following notations:
\begin{align*}
z_\lambda&=\prod_{i\geq1}i^{m_i(\lambda)}m_i(\lambda)!\\
n(\lambda)&=\sum_{i=1}^{l(\lambda)}(i-1)\lambda_i.
\end{align*}

For a set $X=(x_1,x_2,\ldots)$, a partition valued function $\boldsymbol{\mu}:X\to\mathcal{P}$ is specified by
writing $\boldsymbol{\mu}=(\mu^{(x_1)},\mu^{(x_2),},\ldots)$ with each $\mu^{(x_i)}\in\mathcal P$. Alternatively $\boldsymbol{\mu}$ is also called a {\it colored partition} \cite{IJS}
indexed by the color set $X$, where $\mu^{(i)}$ is colored by $x_i$. In this paper, we will define
the weight of the colored partition by assigning
a degree to each color, namely, the weight
is defined by $\|\bmu\|=\sum_{x_i\in X}d(x_i)|\mu^{(x_i)}|$. Of course, we will sometimes let
$d(x_i)=1$ for all colors $x_i$.

Let $\Lambda$ be the ring of symmetric functions over $\mathbb Q$, and let $\Lambda_k=\Lambda\otimes_{\mathbb Z} k$ for
the field $k$.
$\Lambda_{\mathbb Q}$ has several bases: elementary symmetric functions $\{e_\lambda\}$, monomial functions  $\{m_\lambda\}$, homogeneous functions $\{h_\lambda\}$,
power-sum functions $\{p_\lambda\}$ and Schur functions $\{s_\lambda\}$. We will also work with the Hall-Littlewood symmetric functions
over $F=\mathbb Q(t)$. Let  $\{v_\lambda\}$  be any of them. Let $x^{(i)}=(x_{(i,1)},x_{(i,2)},\ldots)$ be the variables in the $i$-th ring $\Lambda_F$. For each $r$-colored partition $\boldsymbol{\lambda}$ we define
\begin{align}
 v_{\boldsymbol{\lambda}}=v_{\lambda^{(1)}}(x^{(1)})v_{\lambda^{(2)}}(x^{(2)})\ldots v_{\lambda^{(r)}}(x^{(r)}),
\end{align}
then $\{v_{\boldsymbol{\lambda}}\}$ forms a basis for the space $\Lambda_{F}^{\otimes r}$ \cite{IJS}. We also define
\begin{align}
z_{\boldsymbol\lambda}=z_{\lambda^{(1)}}\ldots z_{\lambda^{(r)}}.
\end{align}

Similarly, we can extend the inner product of $\Lambda_{\mathbb{Q}(t)}$ to $\Lambda_{\mathbb{Q}(t)}^{\otimes r}$ by
\begin{align}
\langle u_1\otimes\ldots\otimes u_r,v_1\otimes\ldots\otimes v_l \rangle=\langle u_1, v_1\rangle\ldots\langle u_r, v_r\rangle.
\end{align}

Now let's describe conjugacy classes of $\GL_n(\mathbb F_q)$.
Let $\mathbb{F}_q$ be the field of $q$ elements, $\overline{\mathbb F}_q$ the algebraic closure of $\mathbb F_q$. Let $F:x\mapsto x^{q}$ be the Frobenius automorphism of
$\overline{\mathbb F}$ over $\mathbb F_q$, the set $\mathbb F_{q^n}=\overline{\mathbb F}_q^{F^n}=\{{F^{n}(x)=x|x\in\bar{\mathbb F}_q}\}$ is the unique field extension of $\mathbb F_q$ of degree $n$. Let $M$\,(resp. $M_n$) denote the multiplicative group of $\bar{\mathbb F}_q$\,(resp. $\bar{\mathbb F}_{q^n}$) so that $M_n=M^{F^n}$ and $|M_n|=q^n-1$.

Let $M_n^*$ be the character group of $M_n$ consisting of linear characters of $M_n$. If $m$ divides $n$, the norm map $N_{n,m}:M_n\rightarrow M_m$ defined by $N_{n,m}(x)=x^{(q^n-1)/(q^m-1)}=\prod_{i=0}^{\frac{m}{n}-1}F^{mi}x$ is a surjective homomorphism. We can regard $M_m$ as a subgroup of ~$M_n$, and $M_m^*$ is embedded in $M_n^*$ by the transpose of the norm homomorphism $N_{n,m}$. Let $M^*=\bigcup_{n\geq1}M_n^*$ be the direct limit of the $M_n^*$, the Frobenius map $F:\xi\to\xi^{q}$ acts on $M^*$, and $M^*_n$ is the $F^n$-invariant space of $M^*$ for any $n\geqslant1$. We define a pairing of $M_n^*$ with $M_n$ by $\langle{\xi},{x}\rangle_n=\xi(x)$ for $\xi\in M_n^*$ and $x\in M_n$. If $m$ divides $n$ and $\xi\in M^*_m$, $x\in M_m$, we have $\langle{\xi},{x}\rangle_n=(\langle{\xi},{x}\rangle_m)^{\frac{n}{m}}$.

Let $\Phi$ (resp. $\Phi_n$) denote the set of {\it $F$-orbits} in $M$ (resp. $M_n$). Each orbit of $\Phi$ is of the form $\{x,x^q,\ldots,x^{q^d-1}\}$, where $x^{q^d}=x$. Let $f=\prod_{i=0}^{d-1}(t-x^{q^{i}})$ be the (irreducible) minimal polynomial of $x$, where each distinct root of $f$ belongs to the same orbit, and the degree $d(f)=d$ is exactly the cardinality of the orbit. Therefore we can also use $f=\prod_{i=0}^{d-1}(t-x^{q^{i}})$ to denote
the orbit, and let
$\Phi$ denote the set of monic irreducible polynomials over $\mathbb F_q$ excluding $t$. Clearly, $\Phi_n=\{f\in\Phi\mid \,d(f)| n\}$. See \cite{Isa} for details.

Similarly, let $\Phi^*$ (resp. $\Phi_n^*$) denote the set of $F$-orbits in $M^*$ (resp. $M_n^*$). For $\phi\in\Phi^*$, we denote by $d(\phi)$ its cardinality. Clearly, $\Phi_n^*=\{\phi\in\Phi^*|\,d(\phi)|n\}$.

We can define an $F$-invariant pairing
$M$ and $M^*$ or rather a pairing between $\Phi^*$ and $\Phi$ as follows. For
orbits $\phi\in \Phi^*$ and $f\in\Phi$, fixing an $x\in f$ we define
\begin{equation}\label{e:pairing1}
(\phi, f)_n=\frac1{d(\phi)}\sum_{\xi\in \phi}\langle\xi, x\rangle_n
\end{equation}
where $\xi$ runs through the orbit $\phi$ and $d(\phi)|n$ and $d(f)|n$. Note that $(\phi, f)_n=0$ unless $d(\phi), d(f)|n$. We remark that 
\eqref{e:pairing1} is independent from the choice of $x\in f$. This is because if $x'\in f$ then
$x=F^k(x')$ for some $k$, then $\langle\xi, x'\rangle_n=\langle\xi, F^kx\rangle_n= \langle F^{k}\xi, x\rangle_n$ and $F^k\xi$ also runs through the orbit of $\xi$.

This pairing is also symmetric in the sense that one also has that
\begin{equation}\label{e:pairing2}
(\phi, f)_n=\frac1{d(f)}\sum_{x\in f}\langle\xi, x\rangle_n
\end{equation}
summed over $x\in$ orbit $f$ and for a fixed $\xi\in \phi$ and $d(\phi), d(f)| n$. 
In fact, $\langle\xi^i, x\rangle=\langle\xi, x^i\rangle$ and we note that
$\sum_{i=0}^{n-1}\langle \xi^{q^i}, x\rangle_n=\frac{n}{d([\xi])}\sum_{i=0}^{d([\xi])-1}\langle\xi^{q^i}, x\rangle_n$ by the cyclic property of $M^*_n$. Similarly $\sum_{i=0}^{n-1}\langle \xi, x^{q^i}\rangle=\frac{n}{d([x])}\sum_{i=0}^{d([x])-1}\langle \xi, x^{q^i}\rangle_n$. Here $[\xi]$ or $[f]$ refer to the $F$-orbit containing $\xi$ or $f$.

For $g\in G_n$ the $n$-dimensional vector space $\mathbb F^n_q$ is canonically a $\mathbb F_q[t]$-module called $V_g$ with the action $t.v=gv$ for all $v\in \mathbb F^n_q$.
Two elements $g$ and $h$  are conjugate in $G_n$ if and only if their corresponding ${\mathbb F_q}[t]$-modules $V_g\simeq V_h$. Since ${\mathbb F_q}[t]$ is a principle ideal domain, $V_g$ is a direct sum of cyclic modules of the form ${\mathbb F_q}[t]/{(f)}^m$, where $m\geq1$, $f\in\Phi$. For example, the identity of $G_n$ corresponds to the
${\mathbb F_q}[t]$-module $V_1=({\mathbb F_q}[t]/(t-1))^{\oplus n}$.

\begin{prop}\cite{Mac}
The conjugacy classes in $G_n$ are parameterized by  partition valued functions $\boldsymbol{\mu}:\Phi\to\mathcal{P}$ such that
\begin{align*}
\|\bmu\|=\sum_{f\in\Phi}d(f)|\boldsymbol{\mu}(f)|=n.
\end{align*}
Explicitly, the class $c_{\boldsymbol{\mu}}\ni g$ gives rise to
\begin{align*}
V_g=V_{\boldsymbol{\mu}}=\bigoplus_{f\in\Phi}\bigoplus_{i\geq1}{\mathbb F_q}[t]/{(f)}^{\mu_i{(f)}}
\end{align*}
which corresponds to the partition valued function  $\boldsymbol{\mu}: f\mapsto {\boldsymbol{\mu}}(f)=(\mu_1(f),\mu_2(f),\ldots)$.
\end{prop}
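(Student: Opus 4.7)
The plan is to exploit the standard dictionary between conjugacy classes in $G_n = \GL_n(\mathbb F_q)$ and isomorphism classes of certain $\mathbb F_q[t]$-modules, which the excerpt has already set up. Given $g \in G_n$, the space $V = \mathbb F_q^n$ becomes the $\mathbb F_q[t]$-module $V_g$ via $t \cdot v = gv$. Two elements $g, h$ are conjugate in $G_n$ exactly when there is an $\mathbb F_q$-linear isomorphism $V_g \to V_h$ intertwining the $t$-action, i.e., $V_g \simeq V_h$ as $\mathbb F_q[t]$-modules. Hence the conjugacy classes in $G_n$ correspond bijectively to isomorphism classes of $n$-dimensional $\mathbb F_q[t]$-modules on which $t$ acts invertibly (since $g$ is invertible).

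Next I would invoke the structure theorem for finitely generated modules over the PID $\mathbb F_q[t]$: every such $V$ is, uniquely up to reordering, a direct sum of primary cyclic modules $\mathbb F_q[t]/(p)^{e}$ with $p$ running over monic irreducible polynomials in $\mathbb F_q[t]$ and $e \geq 1$. The invertibility of the $t$-action rules out $p = t$, for on $\mathbb F_q[t]/(t)^e$ the element $t$ is nilpotent and cannot equal the invertible operator $g$. Thus $p$ ranges only over $\Phi$, matching the statement's restriction to monic irreducibles distinct from $t$.

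For each $f \in \Phi$, collect the exponents $e$ of the summands $\mathbb F_q[t]/(f)^e$ appearing in $V_g$ into a weakly decreasing sequence $\boldsymbol{\mu}(f) = (\mu_1(f) \geq \mu_2(f) \geq \cdots) \in \mathcal P$; only finitely many $f$ give nonzero partitions. This produces the partition-valued function $\boldsymbol{\mu} : \Phi \to \mathcal P$ and the decomposition
\begin{equation*}
V_g \simeq V_{\boldsymbol{\mu}} = \bigoplus_{f \in \Phi}\bigoplus_{i \geq 1}\mathbb F_q[t]/(f)^{\mu_i(f)}
\end{equation*}
claimed in the proposition. Since $\dim_{\mathbb F_q}\mathbb F_q[t]/(f)^{e} = e\,d(f)$, dimension counting yields $n = \dim_{\mathbb F_q} V_g = \sum_{f \in \Phi} d(f)\,|\boldsymbol{\mu}(f)| = \|\boldsymbol{\mu}\|$. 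The uniqueness clause of the structure theorem guarantees that $g \mapsto \boldsymbol{\mu}$ is well-defined on conjugacy classes, and the construction $\boldsymbol{\mu} \mapsto V_{\boldsymbol{\mu}}$ (choose any $g$ acting as multiplication by $t$ on $V_{\boldsymbol{\mu}}$) shows surjectivity.

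There is no real obstacle: the only subtlety worth double-checking is the exclusion of the prime $t$, which is forced precisely by the condition $g \in \GL_n$ rather than $g \in \mathrm{End}(\mathbb F_q^n)$. Once that is flagged, the proof reduces entirely to the uniqueness statement in the structure theorem over $\mathbb F_q[t]$ plus a dimension count.
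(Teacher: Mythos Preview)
Your proposal is correct and follows exactly the approach the paper sketches in the paragraph immediately preceding the proposition: the $\mathbb F_q[t]$-module structure $V_g$, the equivalence of conjugacy with module isomorphism, and the PID decomposition into primary cyclic pieces $\mathbb F_q[t]/(f)^m$ with $f\in\Phi$. The paper does not give a detailed proof (the result is cited from \cite{Mac}), so your write-up simply fills in the standard details---the exclusion of $p=t$ via invertibility of $g$, the dimension count, and the uniqueness from the structure theorem---all of which are implicit in the paper's setup.
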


For this reason, we will refer these $\Phi$-colored partitions such that $\|\bmu\|=\sum_{f\in\Phi}d(f)|\boldsymbol{\mu}(f)|$
as {\it weighted partition-valued functions} $\Phi\rightarrow \mathcal P$.

We will use the $q$-integer $[n]=1+q+\cdots+q^{n-1}$ in this paper. Define
\begin{align}
a_{\lambda}(q)=q^{|\lambda|+2n(\lambda)}b_\lambda(q^{-1}),
\end{align}
where $b_{\lambda}(t)=(1-t)^{l(\lambda)}\prod_{i\geqslant 1}[m_i(\la)]_t!$, $  [n]_t!=[n]_t[n-1]_t\ldots [1]_t$. One can also define the Gauss number $\begin{bmatrix} m\\ n\end{bmatrix}=[m]!/[n]![m-n]!$.
Let $V_{(f)}$ denote the $f$-primary component of $V_{\boldsymbol{\mu}}$: $V_{(f)}\cong\bigoplus_i\mathbb F_q[t]/{(f)}^{\mu_i(f)}$. The localization $\mathbb F_q[t]_{(f)}$ is a discrete valuation ring with $(\mathbb F_q[t])_f=\mathbb F_q[t]/(f)$ as  the residue field, $V_{(f)}$ is a finite $\mathbb F_q[t]_{(f)}$-module of type $\boldsymbol{\mu}(f)$, then $|\mathrm{Aut}(V_{f})|=a_{\boldsymbol{\mu}(f)}(q_f)$, where $q_f=\mathrm{Card}((\mathbb F_q)_f)=q^{d(f)}$ \cite[Chap. 2]{Mac}. For any $g\in G_n$, the automorphisms of $V_g$ is just the elements $h\in G_n$ commuting with $g$, hence the order of centralizer of $g$ in $G_n$ is
\begin{align}
a_{\boldsymbol{\mu}}=\prod_{f\in\Phi}a_{\boldsymbol{\mu}(f)}(q_f).
\end{align}

Let $H(\mathbb F_q[t]_{(f)})$ be the free $\mathbb Z$-module with the basis of finite modules $u_{\lambda}=V_{\lambda}=\oplus_i\mathbb F_q[t]/(f)^{\lambda_i}$ indexed by all partitions $\lambda\in\mathcal P$. $H(\mathbb F_q[t]_{(f)})$ becomes the Hall algebra under the multiplication:
\begin{equation}
u_{\mu}v_{\mu}=\sum_{\lambda}G^{\lambda}_{\mu\nu}u_{\lambda}
\end{equation}
where $G^{\lambda}_{\mu\nu}$ is the number of the extensions $V$ of $V_{\mu}$ by $V_{\nu}$ or
the short exact sequences
\begin{equation}
0\longrightarrow V_{\mu}\longrightarrow V\longrightarrow V_{\nu}\longrightarrow 0
\end{equation}
such that $V/V_{\mu}\simeq V_{\nu}$ (then $V$ is of type $\lambda$).
We recall the following result on Hall algebras.

\begin{prop}\cite[Chap. II.4]{Mac}\label{P:Hall}
The Hall algebra $H(\mathbb F_q[t]_{(f)})$ is generated as a $\mathbb Z$-algebra by the algebraically independent elements $u_{(1^r)}$ and the structure constants $G_{\mu\nu}^{\lambda}=0$ unless $|\lambda|=|\mu|+|\nu|$ and
are uniquely determined by
\begin{equation}
G^{\lambda}_{\mu(1^r)}(q_f)=q_f^{n(\lambda)-n(\mu)-n(1^r)}
\prod_{i\geq 1}\begin{bmatrix}\lambda_i'-\lambda'_{i+1}\\ \lambda_i'-\mu_i'\end{bmatrix}_{q_f^{-1}},
\end{equation}
where $\lambda/\mu$ is a vertical $r$-strip and $\la'_i$ are the $i$-th parts of the conjugate partition $\lambda'$ of $\lambda$.
\end{prop}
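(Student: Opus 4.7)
The plan is to work inside the discrete valuation ring $R = \mathbb F_q[t]_{(f)}$ with uniformizer $f$ and residue field $\kappa = R/(f)$ of cardinality $q_f = q^{d(f)}$, so that every finite $R$-module is isomorphic to a unique $V_\lambda = \bigoplus_i R/(f)^{\lambda_i}$. The grading statement is immediate: $\mathrm{length}_R V_\lambda = |\lambda|$, and additivity of length in short exact sequences forces $G^\lambda_{\mu\nu}$ to vanish unless $|\lambda| = |\mu| + |\nu|$.

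The main work is to compute $G^\lambda_{\mu(1^r)}$. Since $V_{(1^r)} \cong \kappa^r$ is annihilated by $f$, an extension $0 \to V_\mu \to V \to V_{(1^r)} \to 0$ with $V \cong V_\lambda$ corresponds bijectively to a submodule $W \leq V$ isomorphic to $V_\mu$ such that $fV \subseteq W$. That $V/W$ is semisimple of length $r$ forces $\mu \subseteq \lambda$ with $\lambda/\mu$ a vertical $r$-strip; outside this combinatorial regime no such $W$ exists, explaining the vanishing.

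To count the surviving submodules I would use the $f$-adic filtration $V \supseteq fV \supseteq f^2V \supseteq \cdots$, whose $i$-th graded piece $M_i = f^{i-1}V/f^iV$ is a $\kappa$-vector space of dimension $\lambda_i'$, with consecutive pieces linked by the surjection induced by multiplication by $f$ whose kernel has dimension $\lambda_i'-\lambda_{i+1}'$. Specifying $W \supseteq fV$ of type $\mu$ amounts to choosing, level by level, compatible subspaces $W_i \subseteq M_i$ of codimension $\lambda_i' - \mu_i' \in \{0,1\}$; inside each such kernel-quotient the number of choices becomes a Gaussian binomial $\begin{bmatrix}\lambda_i'-\lambda_{i+1}'\\ \lambda_i'-\mu_i'\end{bmatrix}_{q_f^{-1}}$, while the residual freedom of lifting the graded data back to a genuine $R$-submodule accounts for the prefactor $q_f^{n(\lambda)-n(\mu)-n(1^r)}$. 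The main obstacle is carefully disentangling the combinatorial binomials from the geometric $q_f$-power at each level and checking that the total exponent assembles into $n(\lambda)-n(\mu)-n(1^r)$.

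Finally, I would establish generation and algebraic independence by triangularity. The formula shows that $u_\mu \cdot u_{(1^r)}$ contains $u_\lambda$ with coefficient $1$ when $\lambda$ is obtained from $\mu$ by prepending a column of length $r$; iterating column by column, a suitable monomial $u_{(1^{\lambda_1'})} u_{(1^{\lambda_2'})} \cdots$ equals $u_\lambda$ plus a $\mathbb Z$-combination of $u_\nu$ strictly below $\lambda$ in the dominance order. Hence the $u_{(1^r)}$ generate the Hall algebra and the monomials in them indexed by partitions form a $\mathbb Z$-basis; comparing the graded rank $|\mathcal P_n|$ on either side forces algebraic independence. Uniqueness of the full multiplication table then follows, since the structure constants $G^\lambda_{\mu\nu}$ are determined by the collection $\{G^\lambda_{\mu(1^r)}\}$ together with associativity.
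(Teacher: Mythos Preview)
The paper does not give its own proof of this proposition: it is quoted from \cite[Chap.~II.4]{Mac} and used as a black box. Your outline is essentially the standard argument found there --- count submodules $W\subseteq V_\lambda$ with $V_\lambda/W$ killed by $f$ (hence $W\supseteq fV_\lambda$), show the type constraint forces $\lambda/\mu$ to be a vertical strip, enumerate via the $f$-adic filtration, and deduce generation/independence by triangularity against dominance --- so in spirit you are reproducing the cited proof rather than offering an alternative.

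One point to tighten: since $W\supseteq fV_\lambda$, the submodule $W$ is completely determined by the single subspace $\overline W=W/fV_\lambda$ inside $V_\lambda/fV_\lambda\cong\kappa^{\lambda_1'}$, not by a ``level-by-level'' choice of $W_i\subseteq M_i$ as you wrote. The type of $W$ is then governed by how $\overline W$ sits relative to the images of the maps $f^{i}\colon V_\lambda/fV_\lambda\to f^iV_\lambda/f^{i+1}V_\lambda$ (equivalently, by the dimensions of $\overline W\cap \ker(f^i|_{V_\lambda/fV_\lambda})$). Reorganising the count along this flag is what produces the product of Gaussian binomials $\prod_i\begin{bmatrix}\lambda_i'-\lambda_{i+1}'\\ \lambda_i'-\mu_i'\end{bmatrix}_{q_f^{-1}}$ and the residual power $q_f^{n(\lambda)-n(\mu)-n(1^r)}$; your description conflates the single subspace choice with its filtered shadow, which is why the bookkeeping felt like the ``main obstacle''. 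Once this is phrased correctly the exponent check is a short computation with $n(\lambda)=\sum_i\binom{\lambda_i'}{2}$.
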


\section{Construction of two Heisenberg systems}

Let $R(G_n)$ be the space of complex valued class functions on $G_n$ equipped with the Hermitian scalar product (see for example \cite{Se}):
\begin{align*}
\langle u,v\rangle_{G_n}=\frac{1}{|G_n|}\sum_{g\in G_n}u(g)\overline{v(g)}, \qquad u, v\in R(G_n).
\end{align*}
$R(G_n)$ is a complex vector space of $\dim R(G_n)=|\mathcal P_n(\Phi)|$, the number of weighted partition valued functions $\bl: \Phi\rightarrow \mathcal P$ such that $||\bl||=\sum_{f\in\Phi}d(f)|\bl(f)|=n$. It is cautioned that this number is not that of the partition-valued functions from $\Phi\rightarrow \mathcal P$
of the usual weight $n$.

Introduce the graded space
\begin{align*}
R_G=\bigoplus_{n\geq 0}R(G_n),
\end{align*}
with the inner product induced from that of $R(G_n)$:
$\langle R(G_m), R(G_n)\rangle=\delta_{m, n}\langle R(G_n), R(G_n)\rangle_{G_n}$.
Let $P_{(n_1, \ldots, n_k)}$ be the parabolic subgroup of $G_n$ in type $(n_1, \ldots, n_k)\Vdash n$ consisting of block upper triangular matrices
\begin{equation*}
\begin{pmatrix} g_{11} & g_{12} & \dots & g_{1k}\\
 0 & g_{22} & \dots & g_{2k}\\
 \vdots & \vdots & \ddots &  \vdots \\
 0 & 0 & \dots & g_{kk}\end{pmatrix}
\end{equation*}
where $g_{ij}\in \mathrm{Mat}(n_i, n_j)$ and $g_{ii}\in \GL_{n_i}(\mathbb F_q)$.

We define a multiplication on $R_G$ as follows. For $u\in R(G_n),v\in R(G_m)$, the class function $u\circ v\in R(G_{n+m})$ is given by
\begin{align}
u\circ v=\mathrm{Ind}_{P_{(n, m)}}^{G_{n+m}}(u(g_{11})v(g_{22})),
\end{align}
where $P_{(n, m)}=\begin{pmatrix} G_n & *\\
 0 & G_m\end{pmatrix}$ is the parabolic subgroup of $G_{n+m}$ in type $(n, m)$. The parabolic subgroup $P_{(n, m)}$ consists of elements $g\in G_{n+m}$ fixing the flag $F$:
 \begin{equation}\label{e:flag}
 \qquad 0=V_0\subset V_1\subset V_2=V=\mathbb F_q^{n+m},
 \end{equation}
 where $V_1\simeq \mathbb F_q^n$ is embedded in $V_2$ canonically
 (spanned by the first $n$ unit vectors).
 It is easy to see that $R_G$ is an associative commutative algebra under the multiplication.

Let $\pi_{\bmu}$ be the characteristic function of the conjugacy class $c_{\bmu}$ defined by
\begin{align*}
\pi_{\bmu}(c_{\bl})=\begin{cases} 1, & \mathrm{if} \quad \bl=\bmu\\
0, & \mathrm{if}\quad \bl\neq \bmu
\end{cases}.
\end{align*}
Then the $\pi_{\bmu}$, where $\bmu:\Phi\to\mathcal{P}$ such that $\|\bmu\|<\infty$, form a $\mathbb{C}$-basis of $R_G$.

Let's consider the product $\pi_{\bmu}\circ\pi_{\bnu}$. Let $G_{n+m}=\bigcup_ig_iP_{(n, m)}$ be a left coset decomposition. Then for any $x\in G_{n+m}$
 \begin{equation}
 \pi_{\bmu}\circ\pi_{\bnu}(g)=\sum_i(\pi_{\bmu}\times \pi_{\bnu})(g_i^{-1}gg_i)
 \end{equation}
where the function $\pi_{\bmu}\times \pi_{\bnu}$  vanishes outside of $P_{(n, m)}$. The element $g_i^{-1}gg_i\in P_{(n, m)}$ if and only if $g$ fixes the flag $F$ \eqref{e:flag}, or $g_iF$ is a flag of
submodules of the $\mathbb F_q[t]$-module $V_g$. Write
$W_i=tV_i$ and
\begin{equation*}
g_i^{-1}gg_i=\begin{pmatrix}h_{11} & h_{12}\\
0 & h_{22}\end{pmatrix}
\end{equation*}
then the modules $W_1\simeq V_{h_{11}}$ and $W_2/W_1\simeq V_{h_{22}}$. As $\pi_{\bmu}$ and $\pi_{\bnu}$ are
the characteristic functions at the conjugacy classes
$c_{\bmu}$ and $c_{\bnu}$, $(\pi_{\bmu}\times \pi_{\bnu})(g_i^{-1}gg_i)=1$ if and only if the flag $g_iF$ is of
of type ($\bmu$, $\bnu$).
Therefore by Prop. \ref{P:Hall} we have the following result

\begin{prop}\cite{jaG}\cite[Chap.IV.3]{Mac}
The space $R_G$ is a tensor product of the Hall algebras:
\begin{equation}
R_G=\bigotimes_{f\in \Phi}H(\mathbb F_q[t]_{(f)})\otimes_{\mathbb Z}\mathbb C
\end{equation}
which is spanned by the characteristic functions
$\pi_{\bmu}$ with the multiplication given by
\begin{align*}
\pi_{\bmu}\circ\pi_{\bnu}=\sum_{\bl}{\bf G}^{\bl}_{{\bmu}{\bnu}}\pi_{\bnu}
\end{align*}
where ${\bf G}^{\bl}_{{\bmu}{\bnu}}=\prod_{f\in\Phi}G^{\bl(f)}_{{\bmu(f)}{\bnu(f)}}(q^{d(f)})$ and $G^{\la}_{\mu\nu}(q)$ is the polynomial in $q$ uniquely determined by $$G^{\la}_{\mu(1^m)}(q)=q^{n(\la)-n(\mu)-n((1^m))}\prod_{i\geq1}\begin{bmatrix}\la^\prime_i-\la^\prime_{i+1}\\\la^\prime_i-\mu^\prime_i\end{bmatrix}_{q^{-1}}
$$
for $\lambda/\mu$ being a vertical $m$-strip.
In particular, $\pi_0$ is the identity element of
$R_G$.
\end{prop}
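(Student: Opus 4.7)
The key point is that the excerpt has already reduced the proof to a submodule counting problem, so the plan is to complete that reduction and then factor the count into primary components.

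First I would make the coset/flag correspondence precise. The left cosets $g_i P_{(n,m)}$ are in bijection with length-one flags $g_i F = (0 \subset g_i V_1 \subset V)$ with $\dim g_i V_1 = n$, since $P_{(n,m)}$ is the stabilizer of $F$. The condition $g_i^{-1} g g_i \in P_{(n,m)}$, which is equivalent to $g$ preserving the flag $g_i F$, then says exactly that $W_1 := g_i V_1$ is an $\mathbb{F}_q[t]$-submodule of $V_g$ of dimension $n$. Under this correspondence, the blocks of $g_i^{-1} g g_i$ are the induced endomorphisms on $W_1$ and on $V_g/W_1$, so $(\pi_{\bmu} \times \pi_{\bnu})(g_i^{-1} g g_i) = 1$ iff $W_1$ has type $\bmu$ and $V_g/W_1$ has type $\bnu$. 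Summing over cosets therefore yields
\begin{equation*}
(\pi_{\bmu} \circ \pi_{\bnu})(g) \;=\; \#\bigl\{W \subset V_g \;\big|\; \mathrm{type}(W)=\bmu,\ \mathrm{type}(V_g/W)=\bnu\bigr\}.
\end{equation*}

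Next I would show this count depends only on the class $c_{\bl}$ of $g$ and factors over $\Phi$. Fix $g$ with $V_g \simeq V_{\bl} = \bigoplus_{f \in \Phi} V_{(f)}$, the primary decomposition as $\mathbb{F}_q[t]$-module. Because the minimal polynomials of $f, f' \in \Phi$ are coprime for $f \ne f'$, every $\mathbb{F}_q[t]$-submodule $W$ splits canonically as $W = \bigoplus_f W_{(f)}$ with $W_{(f)} \subset V_{(f)}$, and likewise for the quotient. Thus the submodule count is multiplicative over $f$:
\begin{equation*}
(\pi_{\bmu} \circ \pi_{\bnu})(g) \;=\; \prod_{f \in \Phi}\#\bigl\{W_{(f)} \subset V_{\bl(f)} \;\big|\; \mathrm{type}\,W_{(f)} = \bmu(f),\ \mathrm{type}\,V_{\bl(f)}/W_{(f)} = \bnu(f)\bigr\}.
\end{equation*}
Each factor is, by definition, the Hall number $G^{\bl(f)}_{\bmu(f)\bnu(f)}(q_f)$ for the discrete valuation ring $\mathbb{F}_q[t]_{(f)}$ with residue-field cardinality $q_f = q^{d(f)}$. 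Hence $(\pi_{\bmu} \circ \pi_{\bnu})(g) = {\bf G}^{\bl}_{\bmu\bnu}$ whenever $g \in c_{\bl}$, which gives the multiplication formula and identifies $R_G$ with $\bigotimes_{f \in \Phi} H(\mathbb{F}_q[t]_{(f)}) \otimes_{\mathbb Z} \mathbb C$ as a free module; the map sending $\pi_{\bmu} \mapsto \bigotimes_f u_{\bmu(f)}$ is a $\mathbb C$-linear bijection because the characteristic functions form a basis indexed by $\Phi$-colored partitions. The compatibility of multiplications just shown upgrades it to a ring isomorphism, and $\pi_0$, the characteristic function of the trivial class in $G_0$, is visibly the unit.

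Finally, the explicit formula for $G^{\lambda}_{\mu(1^m)}$ in terms of Gaussian binomials and the polynomial nature of the general structure constants are exactly the content of Proposition~\ref{P:Hall}, which can be invoked directly. The only real subtlety is the bookkeeping in the first paragraph — verifying that the trivial-character stabilizer gives no extra factors (i.e.\ that summing over left cosets of $P_{(n,m)}$ really produces the naive flag count, not a weighted count), which follows because $(\pi_{\bmu} \times \pi_{\bnu})$ is $P_{(n,m)}$-invariant and evaluates to $0$ or $1$. Once this is checked, the rest is the primary-decomposition factorization, which is routine.
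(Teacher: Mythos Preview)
Your proposal is correct and follows essentially the same approach as the paper: the discussion preceding the proposition already sets up the coset/flag correspondence and identifies the summands with submodule flags of $V_g$, and you complete this by making the flag count explicit and inserting the primary-decomposition factorization that the paper leaves implicit in the product formula ${\bf G}^{\bl}_{\bmu\bnu}=\prod_{f}G^{\bl(f)}_{\bmu(f)\bnu(f)}(q_f)$. Both arguments then defer the explicit Hall polynomial formula to Proposition~\ref{P:Hall}.
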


We now identify $R_G$ to the Fock space of
symmetric functions.
Let $X_{i,\phi}(i\geq1,\phi\in\Phi^*)$ be independent variables over $\mathbb{C}$ associated with $\phi$. Let $p_n(\phi)$ be the $r$-th power sum function $p_n(\phi)=p_n(X_{1,\phi},X_{2,\phi},\ldots)=\sum_iX_{i,\phi}^n.$
Let
\begin{align}
\mathcal{F}_G=\bigotimes_{\phi\in\Phi^*}\mathbb{C}[p_n(\phi):n\geq1]\simeq \bigotimes_{\phi\in\Phi^*}\Lambda_{\mathbb C}
\end{align}
be the polynomial algebra generated by the $p_n(\phi)$. The algebra $\mathcal{F}_G$ is a graded algebra with degree given by $\mathrm{deg}(p_n(\phi))=n\cdot d(\phi)$.
Note that $\mathcal F_G$ is the unique level one irreducible representation of the infinite-dimensional Heisenberg Lie algebra $\widehat{\mathfrak h}_{\hat{\overline{\mathbb F}}_q}$ spanned by the $p_n(\phi)$ and the central element $C$ subject to the relation:
\begin{align}
[p_m(\phi), p_n(\phi')]=m\delta_{m, -n}\delta_{\phi, \phi'}C.
\end{align}

For each weighted partition valued function $\tilde{\bl}:\Phi^*\to\mathcal{P}$ such that
\begin{align*}
\|\tilde{\bl}\|=\sum_{\phi\in\Phi^*}d(\phi)|\tilde{\bl}(\phi)|<\infty
\end{align*}
we define the power-sum symmetric function $
p_{\tilde{\bl}}=\prod_{\phi\in\Phi^*}p_{\tilde{\bl}(\phi)}(\phi).
$
Then the space $\mathcal{F}_G$ inherits the inner product (sesquilinear bilinear form) over $\mathbb C$ given by $\langle p_m(\phi), p_n(\phi')\rangle=m\delta_{m, n}\delta_{\phi, \phi'}$ and more generally
\begin{align}\label{def:inn}
\langle p_{\tilde{\bl}},p_{\tilde{\bmu}} \rangle=\delta_{\tilde{\bl}\tilde{\bmu}}z_{\tilde{\bl}}
\end{align}
where $z_{\tilde{\bl}}=\prod_{\phi\in\Phi}z_{\tilde{\bl}(\phi)}$. Componentwise, it is given by
$\langle p_{\la}(\phi_1),p_{\mu}(\phi_2)\rangle=z_\la\delta_{\la\mu}\delta_{\phi_1,\phi_2}$
for $\la, \mu\in\mathcal P$ and $\phi_1, \phi_2\in \Phi^*$.



For an orbit $\phi\in\Phi^*$ and any $\xi\in\phi$, define
\begin{align*}
\hat{p}_n(\xi)=\begin{cases} p_{n/d(\phi)}(\phi) & \mathrm{if} \;d(\phi)|n,\\0 &\mathrm{otherwise},
\end{cases}
\end{align*}
Note that $\hat{p}_n(\xi)=\hat{p}_n(\xi')$ if $\xi, \xi'$ are in the same orbit $\phi$, hence $\hat{p}_n(\xi)\in\mathcal{F}_G$ with degree $n$ and $\hat{p}_n(\xi)$=0 unless $\xi\in M^*_n$. Then we have that
\begin{equation}\label{e:comhat}
\langle \hat{p}_n(\xi), \hat{p}_m(\xi')\rangle=\begin{cases}\frac{n}{d(\phi)}\delta_{n, m} & \mbox{$\xi, \xi'$ in the same orbit $\phi$}\\
0 & \mbox{otherwise}
\end{cases}
\end{equation}

 We introduce another basis indexed by partitions colored by $\Phi$ via the discrete Fourier transform. For
 each $x\in M$, we define
\begin{align}\label{rel:org}
\hat{p}_n(x)=\begin{cases}
(-1)^{n-1}(q^n-1)^{-1}\sum_{\xi\in M^*_n}\overline{\langle \xi,x\rangle}_n\hat{p}_n(\xi) &\mathrm{if}\;x\in M_n\\
0 &\mathrm{if}\;x\notin M_n.
\end{cases}
\end{align}
Then $\hat p_n(x)=\hat p_n(y)$ if $x, y$ are in the same $F$-orbit, and $\hat{p}_n(x)$ are also homogeneous elements of degree $n$. By the second orthogonality relation of irreducible characters of
the finite abelian group $M_n$, it is easy to check that
\begin{align}\label{org2}
\hat{p}_n(\xi)=(-1)^{n-1}\sum_{x\in M_n}\langle \xi,x\rangle_n\hat{p}_n(x)
\end{align}
for $\xi\in M^*_n$. For $x\in M$, let $f$ be the minimal polynomial for $x$ over $k$, i.e. $f$ represents the $F$-orbit of $x$. For each $n\geqslant1$, we similarly define for any orbit $f\in \Phi$
\begin{align}\label{def:f,x}
p_n(f)=\hat{p}_{nd}(x), \qquad\qquad d(f)=d
\end{align}
for some $x\in f$. Note that $deg(p_n(f))=nd(f)$.

\begin{prop}\label{compute0}
For any two polynomials $f_1,f_2\in\Phi$, we have
\begin{align}\label{ref:f,f}
\langle p_n(f_1),p_m(f_2)\rangle=n(q_{f_1}^n-1)^{-1}\delta_{n,m}\delta_{f_1,f_2}.
\end{align}
\end{prop}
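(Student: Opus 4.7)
The plan is to unwind the definition of $p_n(f)$ in terms of $\hat{p}_N(\xi)$, use the orthogonality relation \eqref{e:comhat}, and then collapse the resulting double sum over $M_N^*$ via the second orthogonality relation for characters of the finite cyclic group $M_N$.

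First I would write $p_n(f_1) = \hat{p}_{nd_1}(x_1)$ and $p_m(f_2) = \hat{p}_{md_2}(x_2)$ for some $x_i \in f_i$ and $d_i = d(f_i)$, then apply \eqref{rel:org} to each to expand in the dual basis:
\begin{equation*}
\hat{p}_{nd_i}(x_i) = (-1)^{nd_i-1}(q^{nd_i}-1)^{-1}\sum_{\xi \in M^*_{nd_i}}\overline{\langle \xi, x_i\rangle}_{nd_i}\hat{p}_{nd_i}(\xi).
\end{equation*}
By \eqref{e:comhat}, $\langle \hat{p}_{nd_1}(\xi),\hat{p}_{md_2}(\xi')\rangle = 0$ unless $N:=nd_1 = md_2$ and $\xi,\xi'$ lie in a common orbit $\phi$, in which case it equals $N/d(\phi)$. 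Note that the two signs $(-1)^{N-1}$ cancel, leaving
\begin{equation*}
\langle p_n(f_1), p_m(f_2)\rangle = \frac{N}{(q^N-1)^2}\sum_{\phi:\, d(\phi)\mid N} d(\phi)\,\overline{(\phi,f_1)_N}\,(\phi,f_2)_N,
\end{equation*}
where I have recognized $\sum_{\xi\in\phi}\langle\xi,x_i\rangle_N = d(\phi)(\phi,f_i)_N$ from \eqref{e:pairing1}.

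Next I would rewrite the $\phi$-sum as a sum over $M_N^*$: since each orbit $\phi$ contributes $d(\phi)$ characters $\xi\in\phi$ and both pairings depend only on $\phi$, the sum equals $\sum_{\xi\in M_N^*}\overline{(\phi_\xi,f_1)_N}(\phi_\xi,f_2)_N$. Then I would use the symmetric form \eqref{e:pairing2} to write each pairing as an average over $f_i$:
\begin{equation*}
\overline{(\phi_\xi,f_1)_N}\,(\phi_\xi,f_2)_N = \frac{1}{d_1 d_2}\sum_{x\in f_1,\, y\in f_2} \xi(x^{-1}y).
\end{equation*}
Interchanging the order of summation and invoking the orthogonality relation $\sum_{\xi\in M_N^*}\xi(z) = (q^N-1)\delta_{z,1}$ collapses the inner sum to $(q^N-1)\delta_{x,y}$.

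The condition $x=y$ with $x\in f_1$, $y\in f_2$ forces $f_1=f_2$ (since distinct $F$-orbits are disjoint), hence $d_1 = d_2 =: d$ and consequently $n=m$ from $N=nd_1=md_2$. Counting the $d$ solutions in $f_1$ yields the evaluation $\frac{q^N-1}{d}\delta_{f_1,f_2}$, and combining with the overall factor gives
\begin{equation*}
\langle p_n(f_1), p_m(f_2)\rangle = \frac{N}{(q^N-1)^2}\cdot\frac{q^N-1}{d}\,\delta_{f_1,f_2}\delta_{n,m} = \frac{n}{q^{nd}-1}\,\delta_{n,m}\delta_{f_1,f_2},
\end{equation*}
which is the claim after substituting $q_{f_1}=q^d$. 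The main bookkeeping obstacle is tracking the two consistency constraints (that $\hat{p}_{N}(\xi)$ vanishes off $M_N^*$ and that the orbit of $\xi$ divides $N$), together with the passage from the orbit sum to the character sum on $M_N^*$; once those are handled cleanly, the character orthogonality does all the work.
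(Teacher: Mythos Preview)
Your proof is correct and follows essentially the same route as the paper's: expand $p_n(f_i)$ via \eqref{rel:org}, use \eqref{e:comhat} to reduce to a sum over same-orbit pairs $(\xi,\xi')$ in $M_N^*$, and then collapse by character orthogonality for the cyclic group $M_N$. The only difference is organizational---you package the orbit sums symmetrically through the pairings $(\phi,f_i)_N$ and average over both $f_1$ and $f_2$, whereas the paper keeps $x\in f_1$ fixed and converts the $\xi'$-orbit sum into a sum over Frobenius conjugates $y^{q^j}$ via the identity \eqref{e:F-id}; both arrive at the same orthogonality step.
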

\begin{proof}
Let $x$ be in orbit $f_1$ with $d_1=d(f_1)$ and let $y$ be in $f_2$ with $d_2=d(f_2)$. Denote the orbit of the character $\xi\in M^*$ by $[\xi]$. 
It follows from \eqref{e:comhat} that
\begin{align*}
\langle p_n(f_1), p_m(f_2)\rangle&=\langle \hat{p}_{nd_1}(x),\hat{p}_{md_2}(y)\rangle
\\&=\frac{{(-1)}^{nd_1+md_2}}{(q^{nd_1}-1)(q^{md_2}-1)}\sum_{\xi\in M^*_{nd_1}}\sum_{\xi^\prime\in M^*_{md_2}}\overline{{\langle\xi,x \rangle}}_{nd_1}\langle \xi^\prime,y\rangle_{md_2}\langle \hat{p}_{nd_1}(\xi),\hat{p}_{md_2}(\xi^\prime)\rangle
\end{align*}
where $\xi, \xi'$ must be in the same orbit $[\xi]=\phi$ and $nd_1=md_2$.
Therefore the above is simplified to
\begin{align}\nonumber
&\frac{nd_1\delta_{nd_1, md_2}}{(q^{nd_1}-1)^2}\sum_{\xi, \xi'\in M_{nd_1}^*, [\xi]=[\xi']}\frac1{d([\xi])}\overline{\langle\xi,x \rangle}_{nd_1}\langle \xi^\prime,y\rangle_{nd_1}\\ \label{e:inner}
&=\frac{\delta_{nd_1, md_2}}{(q^{nd_1}-1)^2}\sum_{\xi\in M_{nd_1}^*}\frac{nd_1}{d([\xi])}\overline{\langle\xi,x \rangle}_{nd_1}(\sum_{j=0}^{d([\xi])-1}\langle \xi,y^{q^j}\rangle_{nd_1}),
\end{align}
where we have used the fact that $\langle\xi^k, y\rangle = \langle \xi, y^k\rangle$ for
$0\leq k\leq d([\xi])-1$.

For any positive integer $k$, we know that the Frobenius map $F$ acts on $M^*_k$ additively such that $\{F^i| 0\leq i\leq k-1\}$ is isomorphic to the cyclic group $\mathbb Z_k$.
 Now for any $\xi\in M_k^*$, the image of the $F$-orbit $[\xi]$ divides $\mathbb Z_k$ into
$k/d([\xi])$ equal-cardinality subsets, in other words, we have that
\begin{equation}\label{e:F-id}
\sum_{j=0}^{k-1}\xi(y^{q^j})=\sum_{j=0}^{k-1}\xi^{q^j}(y)=\frac{k}{d([\xi])}\sum_{j=0}^{d([\xi])-1}\xi(y^{q^j}).
\end{equation}
Therefore \eqref{e:inner} can be rewritten as
\begin{align*}
&\frac{\delta_{nd_1, md_2}}{(q^{nd_1}-1)^2}\sum_{\xi\in M_{nd_1}^*}\sum_{j=0}^{nd_1-1}\overline{\langle\xi,x \rangle}_{nd_1}\langle \xi,y^{q^j}\rangle_{nd_1}\\
&=\frac{\delta_{nd_1, md_2}}{(q^{nd_1}-1)^2}\sum_{j=0}^{nd_1-1}\sum_{\xi\in M_{nd_1}^*}\overline{\langle\xi,x \rangle}_{nd_1}\langle \xi,y^{q^j}\rangle_{nd_1}\\
&=\frac{\delta_{nd_1, md_2}}{q^{nd_1}-1}\sum_{j=0}^{nd_1-1}\delta_{x, y^{q^j}}=\frac{n}{q^{nd_1}-1}\delta_{f_1, f_2}\delta_{n, m},
\end{align*}
where the last equality is also due to the same reason explained above \eqref{e:F-id}.\end{proof}

We now regard the polynomial $p_r(f)$ as the power sums of a sequence of variables $Y_{i,f}$, that is $p_r(f)
=\sum_{i\geq 1}Y_{i,f}^r$, each variable has degree $d(f)$. Then the degree of $p_r(f)$ is $r\cdot d(f)$. For each $\phi\in\Phi^*$, $p_r(\phi)$ is a linear combination of $p_n(f)$, $f\in\Phi$, then we can also regard $\mathcal F_G$ as the space
\begin{align}\label{e:FG}
\mathcal{F}_G=\bigotimes_{f\in \Phi}\mathbb{C}[p_n(f):n\geqslant1]=\bigotimes_{f\in \Phi}\Lambda_{\mathbb C}(f),
\end{align}
where $\Lambda_{\mathbb C}(f)=\mathbb C[p_1(f), p_2(f), \ldots]\cong \Lambda_{\mathbb C}$ is the space of
symmetric functions generated by the power-sum symmetric functions $p_n(f)$.

The Fock space is also the unique level one irreducible representation of the infinite dimensional Heisenberg Lie algebra $\widehat{\mathfrak{h}}_{\overline{\mathbb F}_q}$ generated by the $p_n(f)$ and the central element $C$ with
the relation
 \begin{align}\label{e:Heisenberg2}
[p_m(f), p_n(f')]=\frac{m}{q^{d(f)m}-1}\delta_{m, -n}\delta_{f, f'}C, \qquad\quad (m>0)
\end{align}
The Fock space $\Lambda_{\mathbb C}(f)$ is a graded ring with the natural gradation defined by the degree $d(p_r(f))=rd(f)$.

The unique irreducible representation
is realized as follows. For each $f\in\Phi$, the action $p_n(f):\mathcal{F}_G\to \mathcal{F}_G$ is regarded as multiplication operator of degree $nd(f)$, and the adjoint operator is the differential action $p_n^*(f)=\frac{n}{q_f^n-1}\frac{\partial}{\partial p_n(f)}$ of degree $-nd(f)$. Note that $*$ is $\mathbb{C}$-linear
and anti-involution, so
\begin{align}
\langle p_n(f)u,v\rangle=\langle u,p_n^*(f)v\rangle
\end{align}
for $u,v\in\mathcal{F}_G$. Then for $f\in\Phi$ and partitions $\la$ and $\mu$, we have
\begin{align}\label{rel:p,f}
\langle p_\la(f),p_\mu(f)\rangle=z_\la(q_f^{-1})q_f^{-|\la|}\delta_{\la\mu}.
\end{align}

Next we claim that
\begin{equation}\label{e:commutator1}
    \langle p_m(\phi), p_n(f)\rangle=(-1)^{md(\phi)-1}\frac{md(\phi)(\phi, f)}{q_{\phi}^{m}-1}\delta_{md(\phi),nd(f)}
\end{equation}
where $(\phi, f)=(\phi, f)_{md(\phi)}$.
In fact, let $x\in$ orbit $f\in\Phi$
and $\xi\in$ orbit $\phi\in \Phi^*$, then
\begin{align*}
    \langle p_m(\phi), p_n(f)\rangle&=
    \langle \hat{p}_{md(\phi)}(\xi),\hat{p}_{nd(f)}(x)\rangle\\&=\langle \hat{p}_{md(\phi)}(\xi), \frac{(-1)^{nd(f)-1}}{q^{nd(f)}-1}\sum_{\xi'\in M_{nd(f)}^*}\overline{\langle \xi', x\rangle}_{nd(f)} \hat{p}_{nd(f)}(\xi')\rangle\\
    &=\frac{(-1)^{md(\phi)-1}m\sum_{\xi\in\phi}\langle \xi, x\rangle_{md(\phi)}}{q^{md(\phi)}-1}\delta_{md(\phi), nd(f)},
\end{align*}
which shows \eqref{e:commutator1} by recalling the pairing $(\phi, f)$ from \eqref{e:pairing1}.
Equivalently,
\begin{equation}\label{e:commutator2}
   [p_{m}^*(\phi),p_{n}(f)]=(-1)^{md(\phi)-1}\frac{md(\phi)(\phi, f)}{q_{\phi}^{m}-1}\delta_{md(\phi),nd(f)}
\end{equation}
As a consequence, for the orbit $f_1=t-1$ of the identity element $(\phi, f_1)_{n}=1$, we have that
\begin{equation}\label{e:comm}  
[p_{m}^*(\phi),p_{n}(f_1)]=(-1)^{n-1}\frac{n}{q_{\phi}^m-1}\delta_{md(\phi),n}
\end{equation}

\section{Vertex operators for Schur and Hall-Littlewood functions}


In the realization of $\mathcal F_G$ as the second
Fock space \eqref{e:Heisenberg2} spanned by
the symmetric algebra $Sym[p_n(f)'s]$, we note that
this space is canonically isomorphic to $\Lambda_{\mathbb C}(f)=Sym[p_n(f)'s]$
by identifying $h_{-n}(f)$ with $p_n(f)$.
Then the action is given by $c=1$, $h_{-n}(f)$ ($n>0$)
acts as the multiplication operator $p_n(f)$,
and $h_{n}(f)$ ($n>0$) acts as the differential operator
$\frac{n}{q_f^n-1}\frac{\partial}{\partial p_n(f)}$ for any $f\in\Phi$.

For each $f\in\Phi$, following \cite{Jing2} we introduce the vertex operator $Q(f; z)$ as the
following product of exponential operators: $\mathcal{F}_G\to \mathcal{F}_G[[z,z^{-1}]]$
\begin{align}\label{e:VO1}
Q(f; z)&=\mbox{exp} \left( \sum\limits_{n\geq 1} \dfrac{q_f^n-1}{n}p_n(f)z^{n} \right) \mbox{exp} \left( -\sum \limits_{n\geq 1} q^{-n}_f\frac{\partial}{\partial p_n(f)}z^{-n} \right)=\sum_{n\in\mathbb Z}Q_n(f)z^{n},
\end{align}
where $Q_n(f)\in End(\mathcal{F}_G)$.
We remark that $Q(f; z)$ was the vertex operator $Q(q_fz)$ in \cite{Jing2}, therefore
\begin{equation}
Q_n(f).1=q_f^{n}\sum_{\lambda\vdash n}\frac{p_{\lambda}(f)}
{z_{\lambda}(q^{-1}_f)}.
\end{equation}
which is the Hall-Littlewood polynomial associated to one row partition $(n)$ up to the factor $q_f^{-n}$. We recall two propositions from \cite{Jing2} as follows.

\begin{prop}\cite{Jing2} \label{P:HLv} For each
$f\in\Phi$ the set of vectors
$Q_{\la}(f).1=Q_{\la_1}(f)\ldots Q_{\la_l}(f).1$, where $\la=(\la_1,\ldots,\la_l)\in\mathcal P$,
forms
an orthogonal
basis of the ring $\Lambda_{\mathbb C(q_f)}(f)$ of symmetric functions over the field $\mathbb Q(q_f)$  generated by the power-sums $p_1(f), p_2(f), \ldots$, where $1$ is the vacuum vector.
Explicitly one has that
\begin{equation}\label{e:norm}
    \langle Q_{\lambda}(f).1, Q_{\mu}(f).1\rangle =\delta_{\lambda,\mu}
    q^{|\lambda|}_fb_{\lambda}(q^{-1}_f).
\end{equation}
\end{prop}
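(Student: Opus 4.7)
Fix $f\in\Phi$ and set $t=q_f^{-1}$. My plan is to derive everything from two operator product expansions (OPE's): the self-OPE of $Q(f;z)$ and the OPE of $Q^*(f;z)$ with $Q(f;w)$.

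\emph{Step 1 (OPE).} I would first establish
\[
Q(f;z)\,Q(f;w) \;=\; \frac{z-w}{z-tw}\,:\!Q(f;z)Q(f;w)\!:
\]
by moving the annihilation exponential of $Q(f;z)$ past the creation exponential of $Q(f;w)$ using the Heisenberg relation \eqref{e:Heisenberg2}; the rational prefactor comes from exponentiating the commutator sum $-\sum_{n\ge1}\frac{1-t^n}{n}(w/z)^n=\log\frac{1-w/z}{1-tw/z}$. Using the explicit adjoint $p_n(f)^*=\frac{n}{q_f^n-1}\partial_{p_n(f)}$, I compute $Q^*(f;z)=\exp\bigl(-\sum_{n\ge1}\frac{1-t^n}{n}p_n(f)z^{-n}\bigr)\exp\bigl(\sum_{n\ge1}\partial_{p_n(f)}z^n\bigr)$ and by a parallel calculation obtain the OPE of $Q^*(f;z)$ with $Q(f;w)$. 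Extracting coefficients yields an exchange relation among the $Q_n(f)$ and an explicit commutator formula for $[Q_m^*(f),Q_n(f)]$.

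\emph{Step 2 (vacuum annihilation and recursion).} From the creation--annihilation form of $Q^*(f;z)$, the rightmost exponential acts trivially on the vacuum $1$, and the remaining factor contains only non-positive powers of $z$; reading off coefficients gives the crucial property $Q_n^*(f).1=0$ for every $n>0$. Together with adjointness, this produces the recursion
\[
\langle Q_\lambda(f).1,\,Q_\mu(f).1\rangle \;=\; \bigl\langle Q_{\lambda_2}(f)\cdots Q_{\lambda_{l(\lambda)}}(f).1,\; Q_{\lambda_1}^*(f)\,Q_\mu(f).1\bigr\rangle,
\]
and I propagate $Q_{\lambda_1}^*(f)$ rightward through the $Q_{\mu_j}(f)$ using the commutator from Step~1, each commutation either producing a scalar or eventually returning $0$ once $Q_{\lambda_1}^*(f)$ reaches the vacuum.

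\emph{Step 3 (induction).} I would then prove the proposition by induction on $l(\lambda)+l(\mu)$. The base case $l(\lambda)=l(\mu)=1$ reduces to $\langle Q_n(f).1,\,Q_m(f).1\rangle=\delta_{n,m}\,q_f^{\,n}(1-q_f^{-1})=\delta_{n,m}\,q_f^{\,n}b_{(n)}(q_f^{-1})$, which is read off from the two-point function obtained in Step~1. The inductive step, driven by the recursion in Step~2, either forces $\lambda=\mu$ (yielding the advertised norm $q_f^{|\lambda|}b_\lambda(q_f^{-1})$) or returns $0$. Linear independence of $\{Q_\lambda(f).1:\lambda\in\mathcal{P}\}$ follows from orthogonality, and the basis property then follows by matching the dimension $|\mathcal P_n|$ of the degree-$n$ component of $\Lambda_{\mathbb C(q_f)}(f)$.

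The principal obstacle is the combinatorial bookkeeping when $\lambda$ has repeated parts: the factor $b_\lambda(q_f^{-1})=\prod_i\prod_{k=1}^{m_i(\lambda)}(1-q_f^{-k})$ contains quantum factorials $[m_i(\lambda)]_{q_f^{-1}}!$ that only emerge after iterating the exchange relation from Step~1 carefully, so that all ways of pairing equal parts are correctly summed. This is precisely the step at which the original argument in \cite{Jing2} invoked Littlewood's raising formula; here the OPE-driven recursion is expected to supply the required combinatorial identity directly.
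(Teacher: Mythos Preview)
Your approach differs substantially from the paper's. The paper's proof is a two-line reduction: rescale $\tilde p_n(f)=q_f^{n/2}p_n(f)$, observe that $Q(f;z)$ becomes literally the vertex operator $Q(q_f^{1/2}z)$ of \cite{Jing2} for the Heisenberg algebra $[\tilde h_m,\tilde h_n]=\frac{m}{1-q_f^{-|m|}}\delta_{m,-n}$, and then cite \cite[Prop.~2.20]{Jing2} for \eqref{e:norm}. You are instead attempting to reprove that cited result from scratch.

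Your outline is in the right spirit and Steps~1--2 are essentially correct (the OPE prefactor and the vacuum annihilation $Q_n^*(f).1=0$ for $n>0$ are as you describe). However, the description of the recursion in Step~2 is oversimplified: the commutator $[Q_m^*(f),Q_n(f)]$ is \emph{not} a scalar. From the contraction $\frac{1-zw}{1-q_fzw}$ (and its counterpart for the opposite order) one obtains an operator identity that, upon extracting components, expresses $Q_m^*Q_n$ in terms of $Q_nQ_m^*$ plus an infinite tail of shifted products $Q_{n-k}Q_{m-k}^*$ weighted by powers of $t$. Propagating $Q_{\lambda_1}^*$ to the right therefore branches, and the bookkeeping is exactly what produces the quantum factorials in $b_\lambda(q_f^{-1})$. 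You correctly flag this as the ``principal obstacle'' but do not carry it out; saying the recursion is ``expected to supply'' the identity is not a proof. This is precisely the step where \cite{Jing2} falls back on Littlewood's raising formula, and absent either that input or a genuine combinatorial argument handling the branching, your Step~3 is incomplete. If you want a self-contained vertex-operator proof, you must actually execute the repeated-parts induction (e.g.\ by first establishing the rectangular case $\lambda=(n^m)$ and then inducting on the number of distinct parts), rather than asserting it will work.
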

\begin{proof} Let $\tilde{p}_n(f)=q_f^{n/2}p_n(f)$, then
$Q(f; z)$ can be rewritten as
\begin{align*}
Q(f; z)&=\mbox{exp} \left( \sum\limits_{n\geq 1} \dfrac{1-q_f^{-n}}{n}\tilde{p}_n(f)(q^{1/2}_fz)^{n} \right) \mbox{exp} \left(-\sum \limits_{n\geq 1} \frac{\partial}{\partial \tilde{p}_n(f)}(q^{1/2}_fz)^{-n} \right)
\end{align*}
which is exactly the vertex operator $Q(q_f^{1/2}z)$ introduced in \cite{Jing2} associated with the Heisenberg Lie algebra $[\tilde{h}_m(f), \tilde{h}_n(f)]=m\delta_{m, -n}/(1-q^{-|m|}_f)$.
Then \eqref{e:norm} follows from
\cite[Prop. 2.20]{Jing2}.
\end{proof}
We remark that the vector
$q_f^{-|\lambda|}Q_{\lambda}(f).1$ was shown to realize the
Hall-Littlewood symmetric function (with $t=q^{-1}_f$)
associated with $\lambda$ in \cite{Jing2}, where the identification had used
the raising formula of Hall-Littlewood function \cite{Mac}
due to Littlewood's formulation.
In the following,
we give another proof
that $\Lambda_{\mathbb C}(f)$ is the Hall algebra using vertex operator calculus, which will identify $q_f^{-|\lambda|}Q_{\lambda}(f).1$ as the Hall-Littlewood polynomial associated with $\lambda$.
This also
helps us to identify the characteristic class function $\pi_{\bl}$ with the vector
$\prod_{f\in\Phi}Q_{\lambda(f)}$,
which is a new proof  of Green's results
\cite{jaG} that logically bypasses Hall's theorem. 

\begin{prop}\cite{Jing2}
For any $f\in \Phi$ and and any positive integer $m\geq 1$, we have the relation
\begin{align*}
Q_n(f)Q_{m+1}(f)-q_f^{-1}Q_{n+1}(f)Q_{m}(f)=q_f^{-1}Q_{m+1}(f)Q_n(f)-Q_m(f)Q_{n+1}(f).
\end{align*}
In particular, let $m=n$, it simplifies to the commutation relation:
\begin{align}\label{Q_m,m+1}
Q_{m}(f)Q_{m+1}(f)=q_f^{-1}Q_{m+1}(f)Q_m(f).
\end{align}
\end{prop}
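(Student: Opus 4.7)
The plan is to derive the relation from an operator product expansion of $Q(f;z)Q(f;w)$. Using the Heisenberg relation $[p_m(f),p_n(f)]=\frac{m}{q_f^m-1}\delta_{m,-n}$ (equivalently $[\frac{\partial}{\partial p_n(f)}, p_m(f)] = \delta_{nm}$) together with the standard identity $e^Ae^B=e^{[A,B]}e^Be^A$ valid when $[A,B]$ is central, I would move the annihilation half of $Q(f;z)$ past the creation half of $Q(f;w)$. The central exponent is
\[
\Bigl[-\sum_{n\geq 1}q_f^{-n}\frac{\partial}{\partial p_n(f)}z^{-n},\;\sum_{m\geq 1}\frac{q_f^m-1}{m}p_m(f)w^m\Bigr]=-\sum_{n\geq 1}\frac{1-q_f^{-n}}{n}(w/z)^n=\log\frac{1-w/z}{1-w/(zq_f)},
\]
so that
\[
Q(f;z)Q(f;w)=\frac{1-w/z}{1-w/(zq_f)}\;{:}Q(f;z)Q(f;w){:}.
\]

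Next I would observe that the normal-ordered product ${:}Q(f;z)Q(f;w){:}$ is symmetric in $z,w$. Multiplying both sides by $(zq_f-w)/z$ clears the denominator and yields
\[
(zq_f-w)\,Q(f;z)Q(f;w)=q_f(z-w)\,{:}Q(f;z)Q(f;w){:}.
\]
Swapping $z\leftrightarrow w$ gives the analogous identity for $Q(f;w)Q(f;z)$, and subtracting (or adding, with the right sign) produces the symmetric operator identity
\[
(zq_f-w)\,Q(f;z)Q(f;w)+(wq_f-z)\,Q(f;w)Q(f;z)=0.
\]

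Finally, I would extract coefficients. Setting the coefficient of $z^{n+1}w^{m+1}$ on both sides to zero converts the four monomial contributions into the four products $Q_n(f)Q_{m+1}(f)$, $Q_{n+1}(f)Q_m(f)$, $Q_m(f)Q_{n+1}(f)$, $Q_{m+1}(f)Q_n(f)$; after dividing by $q_f$ and rearranging, this reproduces exactly the stated identity. The case $m=n$ is then immediate from the resulting quadratic relation.

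The only delicate step is the bookkeeping in the OPE calculation: one must be careful that the formal-series expansion of $\frac{1}{1-w/(zq_f)}$ is taken in the region $|w|<|z|q_f$, consistent with the geometric-series conventions used when $Q(f;z)$ acts on the Fock space, so that the clearing factor $(zq_f-w)$ legitimately removes the pole and the resulting polynomial identity is valid coefficient-wise. Once this is handled, coefficient extraction is routine.
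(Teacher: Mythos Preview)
Your proof is correct and is precisely the standard operator-product-expansion argument; the paper does not supply its own proof here but simply cites \cite{Jing2}, where this same computation appears. Your bookkeeping in the commutator, the clearing of the rational factor to obtain $(zq_f-w)Q(f;z)Q(f;w)+(wq_f-z)Q(f;w)Q(f;z)=0$, and the coefficient extraction are all accurate, so there is nothing to add.
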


For a fixed  $f\in \Phi$ and any partition $\lambda=(\lambda_1, \ldots, \lambda_l)$,
we write $Q_{\lambda}(f)=Q_{\lambda_1}(f)\ldots Q_{\lambda_l}(f)$. Then
for $\bl: \Phi\rightarrow \mathcal P$ such that $||\bl||=\sum_{f\in\Phi}d(f)|\bl(f)|<\infty$, we define the following two sets of orthogonal vectors in $\mathcal{F}_G$:
\begin{align}\label{e:cha}
Q_{\bl}&=\prod_{f\in\Phi}Q_{\bl(f)}(f).1,\\ \label{e:chb}
P_{\bl}&=q^{-||\bl||}\prod_{f\in\Phi}b_{\bl(f)}(q_f^{-1})^{-1}Q_{\bl(f)}(f).1
\end{align}
which are homogeneous polynomials in the $p_n(f)$ colored by $\Phi$ of degree $\|\bl\|$ and form dual bases in $\mathcal{F}_G$:
\begin{equation}
\langle P_{\bl}, Q_{\bmu}\rangle=\delta_{\bl, \bmu}.
\end{equation}

We now show that the vector $Q_{\bl}$ can be identified with the characteristic class function $\pi_{\bl}$ up to a power of $q$.
Let us introduce the half vertex operator $E(z, f): \mathcal{F}_G\to \mathcal{F}_G[[z,z^{-1}]]$ by
\begin{align*}
E(f; z)&=\mbox{exp} \left(\sum\limits_{n\geq 1} \dfrac{(-1)^{n-1}}{n}p_n(f)z^n\right)=\sum_{n\geq0}E_n(f)z^n,
\end{align*}
where the symmetric functions $E_n(f)$ are explicitly given by
\begin{equation}\label{R:P,Q}
E_n(f)=\sum_{\la\vdash \mathcal P_n}\frac{\epsilon({\la})}{z_{\la}}p_{\la}(f).
\end{equation}
Therefore
$E_n(f)$ is the elementary symmetric function $e_n(f)$ or the Hall-Littlewood symmetric function $P_{(1^n)}(f)$. The adjoint vertex operator under the
inner product $\langle p_{\lambda}(f), p_{\mu}(f)\rangle =z_{\la}(q_f^{-1})q_f^{-|\lambda|}\delta_{\la, \mu}$ is
\begin{align*}
E^*(f; z)&=\mbox{exp} \left(-\sum\limits_{n\geq 1} \dfrac{(-1)^n}{q_f^n-1}\frac{\partial}{\partial p_n(f)} z^n\right)=\sum_{n\geq0}E_n^*(f)z^n.
\end{align*}

\begin{thm}\label{T:EQ}
Let $\nu=(\nu_1,\ldots,\nu_l)$ be a partition and $r$ a positive integer $\leq l$.
Then for any $f\in \Phi$ we have that
\begin{align*}
E_r^*(f)Q_{\nu_1}(f)\cdots Q_{\nu_l}(f).1=\sum\limits_{(i_1,\ldots,i_r)\subset I}Q_{\nu-(e_{i_1}+\ldots+e_{i_r})}(f).1
\end{align*}
summed over all subset $(i_1,\ldots,i_r)$ of $I=\{1,2,\ldots,l\}$. Here $e_i=(\ldots,0,1,0,\ldots)$ are the unit vectors.
\end{thm}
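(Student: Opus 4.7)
The plan is to reduce the theorem to a single commutation identity between $E^*(f;z)$ and $Q(f;w)$ obtained by normal ordering, and then unwind it recursively. First I would compute the vertex-operator product $E^*(f;z)Q(f;w)$ by applying the standard identity $e^{A}e^{B}=e^{[A,B]}e^{B}e^{A}$ (valid when $[A,B]$ is a scalar commuting with $A$ and $B$) to the annihilation part of $E^*(f;z)$ and the creation part of $Q(f;w)$. The Heisenberg relation $[\partial/\partial p_n(f), p_m(f)]=\delta_{n,m}$ gives a contraction in which the factors $q_f^n-1$ cancel:
\[
\sum_{n\geq 1}\frac{(-1)^{n-1}}{q_f^n-1}\cdot\frac{q_f^n-1}{n}(zw)^n=\sum_{n\geq 1}\frac{(-1)^{n-1}}{n}(zw)^n=\log(1+zw),
\]
so that
\[
E^*(f;z)Q(f;w)=(1+zw)\,Q(f;w)E^*(f;z).
\]
Extracting the coefficient of $z^n w^m$ yields the component commutator
\[
E_n^*(f)Q_m(f)-Q_m(f)E_n^*(f)=Q_{m-1}(f)E_{n-1}^*(f),\qquad n\geq 1,\ m\in\mathbb Z.
\]

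Next I would push $E_r^*(f)$ to the right through $Q_{\nu_1}(f)\cdots Q_{\nu_l}(f)$ one factor at a time. At each $Q_{\nu_j}(f)$ the commutator splits the expression into two: one term in which $E_r^*(f)$ passes through unchanged, and one in which it is \emph{absorbed}, sending $Q_{\nu_j}(f)\mapsto Q_{\nu_j-1}(f)$ and $E_r^*(f)\mapsto E_{r-1}^*(f)$. Iterating over all $l$ factors produces a sum indexed by $\epsilon=(\epsilon_1,\ldots,\epsilon_l)\in\{0,1\}^l$, the $\epsilon$-term being
\[
Q_{\nu_1-\epsilon_1}(f)\cdots Q_{\nu_l-\epsilon_l}(f)\,E_{r-|\epsilon|}^*(f).1,
\]
where $|\epsilon|=\sum_j\epsilon_j$ counts the number of absorptions.

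Finally, since $E_s^*(f)$ is a polynomial in the annihilation operators $\partial/\partial p_n(f)$, one has $E_s^*(f).1=0$ for all $s\geq 1$, while $E_0^*(f)$ is the identity. Hence only the terms with $|\epsilon|=r$ survive, and these are in bijection with the $r$-element subsets $\{i_1<\cdots<i_r\}\subset I=\{1,\ldots,l\}$, each contributing exactly $Q_{\nu-e_{i_1}-\cdots-e_{i_r}}(f).1$. Summing over such subsets yields the claimed identity. The main obstacle is verifying the $\log(1+zw)$ contraction in the first step, which requires tracking the exponents $(-1)^{n-1}/(q_f^n-1)$ in $E^*(f;z)$ against $(q_f^n-1)/n$ in $Q(f;w)$; once that is settled the remaining combinatorial bookkeeping is automatic.
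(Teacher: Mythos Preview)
Your proof is correct and follows essentially the same approach as the paper: both hinge on the normal-ordering identity $E^*(f;z)Q(f;w)=(1+zw)\,Q(f;w)E^*(f;z)$, obtained from the same contraction $\sum_{n\geq 1}(-1)^{n-1}(zw)^n/n=\log(1+zw)$. The only difference is cosmetic: the paper stays at the generating-function level, writing $E^*(f;w)\prod_j Q(f;z_j).1=\prod_j(1+wz_j)\prod_j Q(f;z_j).1$ and extracting the coefficient of $w^r z^{\nu}$ in one step, whereas you extract the component commutator $E_n^*Q_m=Q_mE_n^*+Q_{m-1}E_{n-1}^*$ first and then push through factor by factor---the same combinatorics, unpacked.
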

\begin{proof} By using the commutation relations
\begin{align*}
E^*(f; w)Q(f; z)&=\mbox{exp}\left(\left[-\sum\limits_{n\geq 1}\dfrac{(-1)^n}{q_f^n-1}\frac{\partial}{\partial p_n(f)}w^n,  \sum\limits_{n\geq 1}\dfrac{q_f^n-1}{n}p_n(f)z^n\right]\right)Q(f; z)E^*(f; w)\\
            &=\exp\left(\sum_{n\geq1}\dfrac{(-1)^{n+1}(wz)^n}{n}\right)Q(f; z)E^*(f; w)\\
            &=(1+wz)Q(f; z)E^*(f; w).
\end{align*}
It then follows that
\begin{align*}
E^*(f; w)Q(z_1;f)\cdots Q(z_l;f).1=\prod_{i\geq 1}^l(1+wz_i)Q(f; z_1)\ldots Q(f; z_l).1
\end{align*}
Taking coefficient of $w^rz_1^{\nu_1}\ldots z_l^{\nu_l}(r\leq l)$ of both sides, we have
\begin{align*}
E^*_r(f)Q_{\nu_1}(f)\ldots Q_{\nu_l}(f).1=\sum\limits_{(i_1,\ldots,i_r)\subset I}Q_{\nu-(e_{i_1}+\ldots+e_{i_r})}(f).1
\end{align*}
\end{proof}


Now we fix $f\in\Phi$ and
simply write $Q_n$ for $Q_n(f)$ and $q$ for $q_f$ in the next two theorems, similarly $E_r^*$ for $E^*_r(f)$ as well.
We first consider the case of rectangular $\nu=(n^m)$.
\begin{lem}\label{l:action} Let $r\geq 0$ be an integer. The action of $E^*_r$ on $Q_{(n^m)}.1$ is zero unless $r\leq m$ and given by
\begin{equation*}
E_r^*Q_{(n^m)}.1=\begin{bmatrix} m\\ r\end{bmatrix}_{q^{-1}}Q_{(n^{m-r}(n-1)^r)}.1
\end{equation*}
\end{lem}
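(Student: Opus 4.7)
The plan is to reduce the lemma to Theorem \ref{T:EQ} and then use the commutation relation \eqref{Q_m,m+1} to normalize each summand. When $r>m$, the generating function identity $E^*(f;w)Q(f;z_1)\cdots Q(f;z_m).1=\prod_{i=1}^m(1+wz_i)\,Q(f;z_1)\cdots Q(f;z_m).1$ established inside the proof of Theorem \ref{T:EQ} shows that the coefficient of $w^r$ vanishes, so only the range $r\leq m$ needs attention.

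For $r\leq m$, specializing Theorem \ref{T:EQ} to $\nu=(n^m)$ expresses $E_r^*Q_{(n^m)}(f).1$ as a sum indexed by $r$-subsets $I=\{i_1<\cdots<i_r\}$ of $\{1,\ldots,m\}$, the $I$-th summand being $Q_{\mu^{(I)}}(f).1$, where $\mu^{(I)}$ is the length-$m$ composition taking the value $n-1$ at the positions $i_1,\ldots,i_r$ and $n$ elsewhere. Applying the relation $Q_{n-1}(f)Q_n(f)=q_f^{-1}Q_n(f)Q_{n-1}(f)$ from \eqref{Q_m,m+1}, I would migrate every $Q_{n-1}(f)$ past the $Q_n(f)$'s on its right until reaching the normal form $Q_{\mu^{(I)}}(f).1=q_f^{-\mathrm{inv}(I)}Q_{(n^{m-r},(n-1)^r)}(f).1$, where $\mathrm{inv}(I)$ counts the total number of $Q_n(f)$'s located to the right of each $Q_{n-1}(f)$. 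A direct count gives $\mathrm{inv}(I)=\sum_{k=1}^r(m-i_k-r+k)$, since among the $m-i_k$ positions to the right of $i_k$ exactly $r-k$ are occupied by other elements of $I$.

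It remains to sum $q_f^{-\mathrm{inv}(I)}$ over all $r$-subsets $I$. The substitution $j_k=i_k-k$ places $I$ in bijection with weakly increasing sequences $0\leq j_1\leq\cdots\leq j_r\leq m-r$ and rewrites $\mathrm{inv}(I)$ as $r(m-r)-(j_1+\cdots+j_r)$. The resulting sum is $q_f^{-r(m-r)}$ times the classical $q_f$-generating function for partitions inscribed in an $r\times(m-r)$ rectangle, which equals $\begin{bmatrix}m\\r\end{bmatrix}_{q_f}$. Combining this with the elementary identity $q_f^{-r(m-r)}\begin{bmatrix}m\\r\end{bmatrix}_{q_f}=\begin{bmatrix}m\\r\end{bmatrix}_{q_f^{-1}}$ produces the Gaussian coefficient in the statement.

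The main obstacle lies in the second step: the inversion count must be executed carefully so that after the substitution $j_k=i_k-k$ the sum collapses to the Gaussian binomial $\begin{bmatrix}m\\r\end{bmatrix}_{q_f^{-1}}$, which is precisely the Hall-algebra structure constant appearing in Proposition \ref{P:Hall}. Conceptually, this is what aligns the vertex operator calculus with the multiplication in the Hall algebra on rectangular partitions.
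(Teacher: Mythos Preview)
Your proof is correct and begins exactly as the paper does: invoke Theorem~\ref{T:EQ} to write $E_r^*Q_{(n^m)}.1$ as a sum over $r$-subsets of $\{1,\dots,m\}$, then use the commutation relation~\eqref{Q_m,m+1} to straighten every summand into a multiple of $Q_{(n^{m-r}(n-1)^r)}.1$. The two arguments diverge only in how the resulting coefficient $K_r^m=\sum_I q_f^{-\mathrm{inv}(I)}$ is evaluated. The paper proceeds by induction on $r$: splitting according to whether or not the first factor is reduced yields the Pascal-type recursion $K_r^m=K_r^{m-1}+q^{-m+r}K_{r-1}^{m-1}$, which identifies $K_r^m$ with the $q^{-1}$-binomial coefficient. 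You instead compute $\mathrm{inv}(I)$ explicitly, reparametrize by $j_k=i_k-k$, and recognize the sum as $q_f^{-r(m-r)}$ times the generating function for partitions in an $r\times(m-r)$ box, i.e.\ $\begin{bmatrix}m\\r\end{bmatrix}_{q_f}$, then use the palindromicity of the Gaussian binomial. Your route is slightly more self-contained (no induction needed) and makes the link to the partition interpretation of Gaussian binomials explicit, while the paper's inductive argument foreshadows the block-by-block recursion used in the general case of Theorem~\ref{m}. Both are standard and equally rigorous.
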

\begin{proof} By Theorem \ref{T:EQ}, the expansion of $E_r^*Q_{(n^m)}.1$ is the summation of all possible reductions of the indices by one in $Q_{(n^m)}=Q_nQ_n\ldots Q_n$ at $r$ factors (i.e changing $r$ factors of $Q_n$ to $Q_{n-1}$'s ). Using the commutation relation \eqref{Q_m,m+1} all these reduced products
can be brought to $Q_{(n^{m-r}(n-1)^r)}.1$, so we can write
\begin{equation}\label{e:action1}
E_r^*Q_{(n^m)}.1=K_r^m Q_{(n^{m-r}(n-1)^r)}.1
\end{equation}
We prove that $K_r^m$ is equal to the quantum binomial coefficient by induction on $r$. Clearly when $r=1$, repeatedly using \eqref{Q_m,m+1} we have that $Q_{n-1}Q_n^i=q^{-i}Q_n^iQ_{n-1}$, so $K_1^m=1+q^{-1}+\cdots+q^{-m+1}=[m]_{q^{-1}}$. 

Now consider the action of $E_r^*$ on $Q_n^m.1$. Taking the first factor $Q_n$ as a reference,
the index reduction of
$r$ factors can be divided into two groups: (i) the reduction does not involve with the first factor $Q_n$, so the action is the same as $E_r^*$ on $Q_{(n^{m-1})}$
and there are $K^{m-1}_{r-1}$ terms; (ii) If the reduction
includes changing first factor $Q_n$ to $Q_{n-1}$ and the rest of reductions are then among the latter $r-1$ factors which contributes $K^{m-1}_{r-1}$ terms. 

Here when $r-1$ factors are already reduced first
(this is the right order in the normal ordering process), only $n-r$ copies of $Q_n$'s left, so moving the first reduced factor $Q_{n-1}$ to the right produces a factor $q^{-n+r}$. So
we come to the relation:
$
K_{r}^{m}=K_{r}^{m-1}+q^{-m+r}K_{r-1}^{m-1}
$.
Therefore by the induction hypothesis, we have then
\begin{align*}
K_{r}^{m}&=K_{r}^{m-1}+q^{-m+r}K_{r-1}^{m-1}\\
&=\frac{[m-1]!}{[r]![m-r-1]!}+q^{-m+r}\frac{[m-1]!}{[r-1]![m-r]!}\\
&=\frac{[m-1]!}{[r]![m-r]!}\left([m-r]_{q^{-1}}+q^{-m+r}[r]_{q^{-1}}\right)\\
&=\frac{[m]!}{[r]![m-r]!},
\end{align*}
which proves the lemma.
\end{proof}

\begin{thm}\label{m}
Let $\nu$ be a partition
\begin{align*}
E^*_rQ_{\nu}.1=\sum_{\mu}\prod_{i\geq 1}\begin{bmatrix} \nu_i'-\nu_{i+1}'\\ \nu_i'-\mu_i'\end{bmatrix}_{q^{-1}}Q_{\mu}.1
\end{align*}
where the sum runs through all partitions $\mu$ such that $\nu/\mu$ are vertical $r$-strips.
\end{thm}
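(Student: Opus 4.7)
The plan is to reduce to the rectangular case (Lemma~\ref{l:action}) by grouping the reductions from Theorem~\ref{T:EQ} block-by-block along the multiplicity decomposition of $\nu$. Write $\nu=(n_1^{m_1},n_2^{m_2},\ldots,n_k^{m_k})$ with $n_1>n_2>\cdots>n_k>0$, so that $Q_\nu=Q_{n_1}^{m_1}\cdots Q_{n_k}^{m_k}$, and label positions of $Q_\nu$ so that block $i$ occupies $m_i$ consecutive positions. Theorem~\ref{T:EQ} expresses $E_r^*Q_\nu.1$ as the sum of $Q_{\nu-\sum_{j\in S}e_j}.1$ over $r$-subsets $S\subset\{1,\ldots,l(\nu)\}$, and I would partition these subsets according to the profile $(r_1,\ldots,r_k)$ with $r_i=|S\cap\text{block }i|$ and $\sum_i r_i=r$.

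For a fixed profile, within block $i$ I would sum over the $\binom{m_i}{r_i}$ choices of which $r_i$ factors $Q_{n_i}$ to reduce to $Q_{n_i-1}$. Using \eqref{Q_m,m+1} to slide the reduced $Q_{n_i-1}$'s past the remaining $Q_{n_i}$'s inside block $i$ and summing, one obtains (by the same induction as in the proof of Lemma~\ref{l:action}) the weight $\begin{bmatrix}m_i\\ r_i\end{bmatrix}_{q^{-1}}$. The key point is that all reorderings stay inside each block: if $n_{i+1}<n_i-1$ the composition is already nonincreasing at the block boundary, while if $n_{i+1}=n_i-1$ the trailing $Q_{n_i-1}^{r_i}$ of block $i$ is adjacent to $Q_{n_{i+1}}^{m_{i+1}-r_{i+1}}=Q_{n_i-1}^{m_{i+1}-r_{i+1}}$ at the head of block $i+1$, which are the same operator and simply merge without producing any commutator.

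The profile $(r_1,\ldots,r_k)$ therefore contributes $\prod_{i=1}^k\begin{bmatrix}m_i\\ r_i\end{bmatrix}_{q^{-1}}Q_\mu.1$, where $\mu=\mu(r_1,\ldots,r_k)$ is the partition obtained by shortening the last $r_i$ rows in each block of $\nu$ by one; these are exactly the partitions with $\nu/\mu$ a vertical $r$-strip. To match the stated coefficient, I would observe that $\nu'_j-\nu'_{j+1}=m_i$ if $j=n_i$ and vanishes otherwise, while $\nu'_j-\mu'_j=r_i$ if $j=n_i$ and vanishes otherwise (since boxes are removed only in the columns $j=n_i$), so $\prod_{j\geq1}\begin{bmatrix}\nu'_j-\nu'_{j+1}\\ \nu'_j-\mu'_j\end{bmatrix}_{q^{-1}}=\prod_{i=1}^k\begin{bmatrix}m_i\\ r_i\end{bmatrix}_{q^{-1}}$. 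The main obstacle is handling the adjacent-block case $n_{i+1}=n_i-1$ cleanly: one must check both that no extra $q^{-1}$ factor arises when the reduced tail of block $i$ merges with the head of block $i+1$ (which holds because $Q_m$ commutes with itself) and that the merged composition is still a partition of the expected vertical-strip shape (which follows by a row-by-row comparison of $\nu$ and $\mu$).
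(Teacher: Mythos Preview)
Your proposal is correct and follows essentially the same argument as the paper: decompose $Q_\nu$ into rectangular blocks $Q_{n_i}^{m_i}$, group the terms of Theorem~\ref{T:EQ} by the profile $(r_1,\ldots,r_k)$, invoke Lemma~\ref{l:action} on each block to produce the Gaussian binomial $\begin{bmatrix} m_i\\ r_i\end{bmatrix}_{q^{-1}}$, and then translate $m_i=\nu'_{n_i}-\nu'_{n_i+1}$ and $r_i=\nu'_{n_i}-\mu'_{n_i}$. Your treatment of the adjacent-block case $n_{i+1}=n_i-1$ is in fact more explicit than the paper's, which simply writes $Q_\nu=\cdots Q_{i+1}^{m_{i+1}}Q_i^{m_i}Q_{i-1}^{m_{i-1}}\cdots$ with all integer indices (allowing $m_i=0$) so that the merging is automatic and never singled out.
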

\begin{proof} Given the product $Q_{\nu}=\cdots Q_{i+1}^{m_{i+1}}Q_i^{m_i}Q_{i-1}^{m_{i-1}}\cdots $, the action of $E_r^*$
can be configured as follows.
It amounts to all possible reductions of indices by one in $r_1$ factors from $Q_1^{m_1}$, $r_2$ factors from $Q_2^{m_2}$, $\ldots$, and $r_i$ factors from $Q_i^{m_i}$ and so on, where $r_1+r_2+\cdots =r$. The
selections among each $Q_i\cdots Q_i=Q_i^{m_i}$ are order dependent, while the choices from different products $Q_i^{m_i}$ and $Q_j^{m_j}$ are independent.
It follows from Lemma \ref{l:action} that
the coefficient of $Q_{\mu}.1$ in $E^*_rQ_{\nu}.1$ is given by
\begin{equation}
\prod_{i\geq 1}K_{r_i}^{m_i}=\prod_{i\geq 1}\begin{bmatrix}m_i(\nu)\\r_i\end{bmatrix}_{q^{-1}}.
\end{equation}

In the action of $E^*_r$ on $Q_{\nu}.1$, the general summand $Q_{\nu-e_{i_1}-\cdots-e_{i_r}}.1$ is produced by
transforming $r_i$ copies of $Q_i$ into $Q_{i-1}$ as follows:
\begin{align*}
Q_{\nu}=Q_n^{m_n}Q_{n-1}^{m_{n-1}}\cdots Q_2^{m_2}Q_1^{m_1}\longrightarrow & Q_n^{m_n-r_n}Q_{n-1}^{r_n}Q_{n-1}^{m_{n-1}-r_{n-1}}Q_{n-2}^{r_{n-1}}\cdots
Q_2^{m_2-r_2}Q_1^{r_2}Q_1^{m_1-r_1}\\
&=Q_n^{m_n-r_n}Q_{n-1}^{m_{n-1}+r_n-r_{n-1}}\cdots Q_2^{m_2-r_2+r_1}Q_1^{m_1-r_1}=Q_{\mu}
\end{align*}
Therefore the exponents obey the relations: $m_i(\nu)-r_i+r_{i+1}=m_i(\mu)$ for all $i$. Note that the multiplicity $m_i(\nu)=\nu_i'-\nu_{i+1}'$. Thus
\begin{equation}
r_i=\sum_{j\geq i}(m_j(\nu)-m_j(\mu))=\nu_i'-\mu_i'.
\end{equation}

Therefore we have shown that the coefficient of $Q_{\mu}.1$ in $E^*_rQ_{\nu}.1$ is equal to
\begin{equation}
\prod_{i\geq 1}K^{m_i}_{r_i}=\prod_{i\geq 1}\begin{bmatrix}\nu_i'-\nu_{i+1}'\\ \nu_i'-\mu_i'\end{bmatrix}_{q^{-1}}.
\end{equation}
\end{proof}

Using the duality of $Q_{\lambda}$ and $P_{\lambda}$, we have the the following corollary immediately.
\begin{cor}\label{P,P,P}
Let $\mu$ be any partition, and let
$\la$ be the partition obtained from $\mu$ by adding the last column of $r$ boxes, then we have
\begin{align*}
P_\mu(f).1P_{(1^r)}(f).1=P_\la(f).1+\sum_{\nu}g^\nu_{\mu(1^r)}(q_f)P_\nu(f).1,
\end{align*}
where $g^{\nu}_{\mu(1^r)}(q_f)=\prod_{i\geq 1}\begin{bmatrix}\nu_i'-\nu_{i+1}'\\ \nu_i'-\mu_i'\end{bmatrix}_{q_f^{-1}}$ and $\nu$ runs through all partitions (except $\lambda$) such that $\nu/\mu$ are vertical $r$-strips.
\end{cor}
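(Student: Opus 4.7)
The plan is to deduce this Pieri-type expansion by dualizing Theorem \ref{m} with respect to the pairing under which $\{P_{\nu}(f).1\}$ and $\{Q_{\nu}(f).1\}$ are dual bases in $\Lambda_{\mathbb{C}}(f)$. Since the half vertex operator $E_{r}(f)$ contains only creation operators $p_{n}(f)$, it acts on $\mathcal{F}_{G}$ as multiplication by $E_{r}(f).1$; combined with the identification of $E_{r}(f).1$ as the elementary symmetric function $e_{r}(f) = P_{(1^{r})}(f).1$ noted after \eqref{R:P,Q}, this yields
\begin{equation*}
P_{\mu}(f).1 \cdot P_{(1^{r})}(f).1 = E_{r}(f)\, P_{\mu}(f).1.
\end{equation*}

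To read off the coefficients in the $\{P_{\nu}(f).1\}$-basis I would pair against the dual basis $\{Q_{\nu}(f).1\}$ and push $E_{r}(f)$ through as its adjoint $E_{r}^{*}(f)$:
\begin{equation*}
\langle E_{r}(f)\, P_{\mu}(f).1,\, Q_{\nu}(f).1 \rangle = \langle P_{\mu}(f).1,\, E_{r}^{*}(f)\, Q_{\nu}(f).1 \rangle.
\end{equation*}
Applying Theorem \ref{m} to the right-hand side and using $\langle P_{\mu}(f).1, Q_{\sigma}(f).1 \rangle = \delta_{\mu, \sigma}$ collapses the sum to the single value $g^{\nu}_{\mu(1^{r})}(q_{f})$, which is nonzero exactly when $\nu/\mu$ is a vertical $r$-strip. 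This immediately gives
\begin{equation*}
P_{\mu}(f).1 \cdot P_{(1^{r})}(f).1 = \sum_{\nu} g^{\nu}_{\mu(1^{r})}(q_{f})\, P_{\nu}(f).1
\end{equation*}
summed over all such $\nu$.

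To separate the distinguished leading term $P_{\lambda}(f).1$ I would verify directly that $g^{\lambda}_{\mu(1^{r})}(q_{f}) = 1$ for the $\lambda$ obtained by adding a new rightmost column of $r$ boxes to $\mu$. Here $\lambda'_{i} = \mu'_{i}$ for $i \leq \mu_{1}$, $\lambda'_{\mu_{1}+1} = r$, and $\lambda'_{\mu_{1}+2} = 0$, so every $q$-binomial factor reduces to the trivial form $\begin{bmatrix}a\\0\end{bmatrix}_{q_{f}^{-1}} = 1$ except the one at $i = \mu_{1}+1$, which reads $\begin{bmatrix}r\\r\end{bmatrix}_{q_{f}^{-1}} = 1$. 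The only mild subtlety is the adjointness $E_{r}(f)^{*} = E_{r}^{*}(f)$ under the pairing \eqref{rel:p,f}, but this is already baked into the definition of $E^{*}(f; z)$ given just above Theorem \ref{T:EQ}, so the whole argument reduces to a short duality computation with no serious obstacle.
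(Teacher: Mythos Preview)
Your proposal is correct and follows essentially the same approach as the paper, which simply says ``Using the duality of $Q_{\lambda}$ and $P_{\lambda}$, we have the following corollary immediately.'' You have spelled out precisely the duality computation the paper leaves implicit: identify $P_{(1^r)}(f).1$ with $E_r(f).1$, pass $E_r(f)$ to its adjoint $E_r^*(f)$, invoke Theorem~\ref{m}, and extract coefficients via $\langle P_{\mu}, Q_{\nu}\rangle=\delta_{\mu\nu}$; your check that $g^{\lambda}_{\mu(1^r)}(q_f)=1$ for the distinguished $\lambda$ is also accurate.
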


\begin{thm}\label{T:isom1} For fixed $f\in\Phi$,
let $\theta: H(\mathbb F_q[t]_{(f)})\otimes_{\mathbb Z}\mathbb{C}\to\Lambda_{\mathbb{C}}(f)=Sym(p_n(f)'s)$ be the $\mathbb{C}-$linear mapping defined by
\begin{align*}
\theta(u_\la)=q_f^{-n(\la)}P_\la(f).1.
\end{align*}
Then $\theta$ is a ring isomorphism.
\end{thm}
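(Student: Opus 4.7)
The plan is to reduce the theorem to a direct comparison between Macdonald's Pieri-type formula for the Hall algebra (Prop.~\ref{P:Hall}) and the vertex operator Pieri formula just proved in Cor.~\ref{P,P,P}. To begin with, $\theta$ is manifestly a $\mathbb{C}$-linear bijection: $\{u_\lambda\}_{\lambda\in\mathcal P}$ is a $\mathbb{C}$-basis of $H(\mathbb F_q[t]_{(f)})\otimes_{\mathbb Z}\mathbb C$ by construction, and $\{P_\lambda(f).1\}_{\lambda\in\mathcal P}$ is a $\mathbb{C}$-basis of $\Lambda_{\mathbb C}(f)$, since by \eqref{e:chb} it differs from the orthogonal basis $\{Q_\lambda(f).1\}$ of Prop.~\ref{P:HLv} by the nonzero scalars $q_f^{-|\lambda|}b_\lambda(q_f^{-1})^{-1}$. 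Thus $\theta$ sends a basis bijectively onto a rescaled basis.

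The main content is that $\theta$ respects multiplication. Since Prop.~\ref{P:Hall} asserts that $H(\mathbb F_q[t]_{(f)})\otimes_{\mathbb Z}\mathbb C$ is generated as a commutative $\mathbb C$-algebra by $\{u_{(1^r)}:r\geq 1\}$, induction on the number of $u_{(1^\bullet)}$-factors reduces the verification to the single identity $\theta(u_\mu\cdot u_{(1^r)})=\theta(u_\mu)\theta(u_{(1^r)})$ for every $\mu\in\mathcal P$ and $r\geq 1$. Expanding both sides, Prop.~\ref{P:Hall} yields
\begin{equation*}
u_\mu\cdot u_{(1^r)}=\sum_{\nu}q_f^{n(\nu)-n(\mu)-n((1^r))}\prod_{i\geq 1}\begin{bmatrix}\nu_i'-\nu_{i+1}'\\ \nu_i'-\mu_i'\end{bmatrix}_{q_f^{-1}}u_\nu,
\end{equation*}
the sum over partitions $\nu$ such that $\nu/\mu$ is a vertical $r$-strip. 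Applying $\theta$ termwise, the factor $q_f^{-n(\nu)}$ produced by $\theta(u_\nu)=q_f^{-n(\nu)}P_\nu(f).1$ cancels the $q_f^{n(\nu)}$ in the Hall structure constant, leaving
\begin{equation*}
\theta\bigl(u_\mu u_{(1^r)}\bigr)=q_f^{-n(\mu)-n((1^r))}\sum_{\nu}\prod_{i\geq 1}\begin{bmatrix}\nu_i'-\nu_{i+1}'\\ \nu_i'-\mu_i'\end{bmatrix}_{q_f^{-1}}P_\nu(f).1.
\end{equation*}
By Cor.~\ref{P,P,P}, the sum on the right is precisely $P_\mu(f).1\cdot P_{(1^r)}(f).1$ (the leading term $P_\lambda(f).1$ is the summand for which every binomial coefficient collapses to $1$), so the right-hand side equals $\bigl(q_f^{-n(\mu)}P_\mu(f).1\bigr)\bigl(q_f^{-n((1^r))}P_{(1^r)}(f).1\bigr)=\theta(u_\mu)\theta(u_{(1^r)})$, completing the inductive step.

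The substantive work has already been carried out in Theorem~\ref{m} and Cor.~\ref{P,P,P} via vertex operator calculus; the content of Theorem~\ref{T:isom1} is the observation that, once the $q_f^{-n(\lambda)}$ rescaling is introduced, the Hall-algebra Pieri rule and the vertex operator Pieri rule carry identical combinatorial coefficients, and the isomorphism follows by matching structure constants on generators. The only minor bookkeeping is to check that the leading-term normalization in Cor.~\ref{P,P,P} matches the $\nu=\lambda$ summand in Prop.~\ref{P:Hall}, which is immediate from the explicit description of the top partition $\lambda$ (obtained from $\mu$ by appending a rightmost column of length $r$).
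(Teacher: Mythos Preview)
Your proof is correct and follows essentially the same route as the paper: both establish the linear bijection first, then use that $H(\mathbb F_q[t]_{(f)})$ is freely generated by the $u_{(1^r)}$ (Prop.~\ref{P:Hall}) together with the vertex-operator Pieri rule (Cor.~\ref{P,P,P}) to match structure constants after the $q_f^{-n(\lambda)}$ rescaling. The only cosmetic difference is that the paper introduces an auxiliary ring homomorphism $\theta'$ on the generators $u_{(1^r)}\mapsto q_f^{-n((1^r))}e_r$ and proves $\theta'=\theta$ by induction on $|\lambda|$, whereas you verify multiplicativity of $\theta$ directly on products $u_\mu\cdot u_{(1^r)}$.
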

\begin{proof}
Note that $\{P_\la(f).1\}$ is a basis of $\Lambda_{\mathbb{C}}(f)$ by Prop. \ref{P:HLv}, so $\theta$ is a linear isomorphism. Since $H(\mathbb F_q[t]_{(f)})$ is freely generated by $u_{(1^r)}$ by Prop. \ref{P:Hall}, we may define a ring homomorphism
\cite{Mac} $\theta^\prime: H(\mathbb F_q[t]_{(f)})\otimes_{\mathbb {Z}}\mathbb{C}\to\Lambda_{\mathbb{C}}(f)$ by $\theta^\prime(u_{(1^r)})=q_f^{\frac{r(r-1)}{2}}e_r$. We will show $\theta=\theta^\prime$ which completes the proof.

We proceed by induction, assuming the result is true for all $\nu$ with $|\nu|<|\la|$. If $\la\neq0$, and let $\mu$ be a partition obtained from $\la$ by deleting last column, it is clear $\mu$ has length $|\mu|<|\la|$. Suppose that this last column has $m$ elements, we have
\begin{align*}
u_{\mu}u_{(1^m)}=u_\la+\sum_{\nu<\la}G_{\mu(1^m)}^\nu u_\nu.
\end{align*}
Applying $\theta^\prime$ to both sides of above equation, $\theta^\prime(u_\nu)=q_f^{-n(\nu)}P_\nu(f)(\nu<\la)$ by inductive hypothesis and Corollary \ref{P,P,P}, we have $\theta^\prime(u_\la)=\theta(u_\la)$.
\end{proof}
Since $u_\mu u_\nu=\sum_\la G^\la_{\mu\nu}(q_f)u_\la$ by Theorem \ref{T:isom1}, we have
\begin{align}\label{T,g}
G^\la_{\mu\nu}(q_f)=q_f^{n(\mu)+n(\nu)-n(\la)}g^\la_{\mu\nu}(q_f).
\end{align}

Then the following result is immediate from Theorem \ref{T:isom1}.
\begin{thm}\label{thm:ch}
The characteristic map $\mathrm{ch}:R_G\to\mathcal{F}_G$,
$$\pi_{\bmu}\mapsto \prod_{f\in\Phi}q_f^{-n(\bmu(f))}P_{\bmu(f)}.1
=\prod_{f\in\Phi}q_f^{-|\bmu(f)|-n(\bmu(f))}b_{\bmu(f)}(q_f^{-1})^{-1}Q_{\bmu(f)}.1$$
is an isometric ring isomorphism.
\end{thm}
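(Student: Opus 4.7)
The plan is to factor the characteristic map as a tensor product of the local isomorphisms $\theta_f$ from Theorem~\ref{T:isom1} and then reduce the isometry to a single-factor calculation. By the tensor decomposition proposition in Section~3 there is a $\mathbb C$-algebra identification $R_G\cong\bigotimes_{f\in\Phi}H(\mathbb F_q[t]_{(f)})\otimes_{\mathbb Z}\mathbb C$ under which $\pi_{\bmu}$ corresponds to $\bigotimes_{f\in\Phi}u_{\bmu(f)}$, and by \eqref{e:FG} we have $\mathcal F_G=\bigotimes_{f\in\Phi}\Lambda_{\mathbb C}(f)$. Under these decompositions the map $\mathrm{ch}$ becomes $\bigotimes_{f\in\Phi}\theta_f$, where each $\theta_f(u_\la)=q_f^{-n(\la)}P_\la(f).1$ is a ring isomorphism by Theorem~\ref{T:isom1}. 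Thus $\mathrm{ch}$ is a $\mathbb C$-algebra isomorphism, and the ring-homomorphism assertion is immediate: on structure constants, $\mathbf G^{\bl}_{\bmu\bnu}=\prod_f G^{\bl(f)}_{\bmu(f)\bnu(f)}(q_f)$ transforms under the chosen normalization exactly as dictated by \eqref{T,g} on each tensor factor.

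For the isometry, I would check it on the orthogonal basis $\{\pi_{\bmu}\}$. On the domain side, the centralizer formula gives
$$\langle\pi_{\bmu},\pi_{\bmu}\rangle_{G_n}=\frac{|c_{\bmu}|}{|G_n|}=\frac{1}{a_{\bmu}}=\prod_{f\in\Phi}\frac{1}{a_{\bmu(f)}(q_f)},$$
which factors across $\Phi$. On the codomain side, the orthogonality of $\{P_\la(f).1\}_\la$ within each $\Lambda_{\mathbb C}(f)$ (from Prop.~\ref{P:HLv}) together with the tensor-product structure of the inner product on $\mathcal F_G$ imply that $\langle\mathrm{ch}(\pi_{\bmu}),\mathrm{ch}(\pi_{\bnu})\rangle=0$ whenever $\bmu\neq\bnu$ and that the diagonal norm also factors over $f$. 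Hence it suffices to verify, factor by factor, that
$$\bigl\langle q_f^{-n(\la)}P_\la(f).1,\; q_f^{-n(\la)}P_\la(f).1\bigr\rangle=\frac{1}{a_\la(q_f)}.$$
Expanding $P_\la(f).1=q_f^{-|\la|}b_\la(q_f^{-1})^{-1}Q_\la(f).1$ and invoking the norm formula \eqref{e:norm}, the left-hand side simplifies to $q_f^{-|\la|-2n(\la)}b_\la(q_f^{-1})^{-1}$, which matches $1/a_\la(q_f)$ by the definition $a_\la(q)=q^{|\la|+2n(\la)}b_\la(q^{-1})$.

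The main obstacle is essentially bookkeeping: the prefactor $q_f^{-n(\la)}$ in the definition of $\mathrm{ch}$ must simultaneously do two jobs, namely absorb the $q_f^{n(\mu)+n(\nu)-n(\la)}$ twist between $G^\la_{\mu\nu}(q_f)$ and $g^\la_{\mu\nu}(q_f)$ so that $\theta_f$ is multiplicative, and convert the vertex-operator norm $q_f^{|\la|}b_\la(q_f^{-1})$ into the inverse centralizer $1/a_\la(q_f)$. Both balances are enforced by the single formula $a_\la(q)=q^{|\la|+2n(\la)}b_\la(q^{-1})$, so once the normalizations are lined up on each local factor the isometric ring isomorphism follows simply by tensoring Theorem~\ref{T:isom1} over $f\in\Phi$.
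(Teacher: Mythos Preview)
Your argument is correct and follows exactly the route the paper intends: it states the result as ``immediate from Theorem~\ref{T:isom1}'', i.e., one tensors the local ring isomorphisms $\theta_f$ over $f\in\Phi$. You have simply made explicit what the paper leaves implicit, in particular the isometry check via $\langle\pi_{\bmu},\pi_{\bmu}\rangle=a_{\bmu}^{-1}$ and the norm computation $\langle q_f^{-n(\la)}P_\la(f).1,\,q_f^{-n(\la)}P_\la(f).1\rangle=q_f^{-|\la|-2n(\la)}b_\la(q_f^{-1})^{-1}=a_\la(q_f)^{-1}$ from \eqref{e:norm}.
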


Since $p_n(\phi)$ are linear combinations of $p_n(f)$, we have that
\begin{equation}
 \mathcal{F}_G=\bigotimes_{f\in\Phi}\mathbb C[p_n(f)'s]
 =\bigotimes_{\phi\in\Phi^*}\mathbb C[p_n(\phi)'s]
\end{equation}
where the basis elements $p_{\tilde{\bl}}$ satisfy the inner product \eqref{def:inn}, in particular $\langle p_m(\phi), p_n(\phi')\rangle=m\delta_{m, n}\delta_{\phi, \phi'}$.
Therefore the adjoint operator the differential action $p_n^*(\phi)=n\frac{\partial}{\partial p_n(\phi)}$ of degree $-nd(\phi)$.
As for $\Lambda_{\mathbb C}(f)=Sym[p_n(f)'s]$, we also define $\Lambda_{\mathbb C}(\phi)=Sym[p_n(\phi)'s]$

Following \cite{Jing1, Jing3} we introduce, for each $\phi\in \Phi^*$, the vertex operators $S(\phi, z)$ and $S^*(\phi, z)$ as the following maps:
 $\mathcal{F}_G\to \mathcal{F}_G[z,z^{-1}]$:
\begin{align}\label{e:Schur1}
S(\phi; z)&=\mbox{exp} \left( \sum\limits_{n\geqslant 1} \dfrac{1}{n}p_n(\phi)z^{n} \right) \mbox{exp} \left( -\sum \limits_{n\geqslant 1} \frac{\partial}{\partial p_n(\phi)}z^{-n} \right)=\sum_{n\in\mathbb Z}S_n(\phi)z^{n},\\ \label{e:Schur2}
S^*(\phi; z)&=\mbox{exp} \left(-\sum\limits_{n\geqslant 1} \dfrac{1}{n}p_n(\phi)z^{n} \right) \mbox{exp} \left(\sum \limits_{n\geqslant 1} \frac{\partial}{\partial p_n(\phi)}z^{-n} \right)=\sum_{n\in\mathbb Z}S^*_n(\phi)z^{-n}.
\end{align}
Their components are operators on $\mathcal{F}_G$. We quote the following result from \cite{Jing3}:
\begin{prop} For any partition $\lambda=(\lambda_1, \lambda_2, \dots, \lambda_l)$ and $\phi\in\Phi^*$,
the operator product
$S_{\lambda}(\phi).1= S_{\lambda_1}(\phi)S_{\lambda_2}(\phi)\ldots S_{\lambda_l}(\phi).1$ form an orthonormal basis in
the subspace $\Lambda_{\mathbb C}(\phi)$, and
they are equal to the Schur function $s_{\lambda}(\phi)$ in the power-sum $p_n(\phi)$
\begin{align}\label{e:orth}
\langle S_{\lambda}(\phi).1,S_{\mu}(\phi).1\rangle=\delta_{\lambda, \mu}.
\end{align}
\end{prop}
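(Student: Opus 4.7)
The proposition is a single-color statement, so the plan is first to restrict attention to $\Lambda_{\mathbb C}(\phi) = \mathrm{Sym}[p_n(\phi):n\ge 1]$, which is canonically isomorphic to the ordinary ring of symmetric functions with the Hall inner product $\langle p_n, p_m\rangle = n\delta_{n,m}$. Under this identification, the operator $S(\phi;z)$ defined in \eqref{e:Schur1} is precisely the Bernstein vertex operator, and the claim reduces to its standard realization of the Schur basis \cite{Jing2, Jing3}, which I will sketch in three steps.

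Step one: a direct Baker-Campbell-Hausdorff computation, using only the bosonic commutator $[p_m(\phi), p_n(\phi)] = m\delta_{m,-n}$, yields the operator product expansion
\begin{equation*}
S(\phi;z)S(\phi;w) = \Bigl(1 - \frac{w}{z}\Bigr)\,:\!S(\phi;z)S(\phi;w)\!:.
\end{equation*}
Since the normal product on the right is symmetric in $z$ and $w$, comparing with the reversed product gives the exchange relation
\begin{equation*}
z\,S(\phi;z)S(\phi;w) + w\,S(\phi;w)S(\phi;z) = 0,
\end{equation*}
or componentwise $S_m(\phi)S_n(\phi) + S_{n-1}(\phi)S_{m+1}(\phi) = 0$.

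Step two: applying only the creation piece of $S(\phi;z)$ to the vacuum (the annihilation piece acts as the identity on $1$) gives $S(\phi;z).1 = \exp\bigl(\sum_{n\ge 1} p_n(\phi)z^n/n\bigr).1 = \sum_{n\ge 0} h_n(\phi)z^n$, so that $S_n(\phi).1 = h_n(\phi)$ for $n\ge 0$ and $0$ otherwise. Iterating the exchange relation of step one to reorder the product $S_{\lambda_1}(\phi)\cdots S_{\lambda_l}(\phi)$ into a signed sum of products with non-negative indices produces the determinantal expansion
\begin{equation*}
S_\lambda(\phi).1 = \det\bigl(S_{\lambda_i - i + j}(\phi)\bigr)_{1\le i,j\le l}.1 = \det\bigl(h_{\lambda_i - i + j}(\phi)\bigr)_{1\le i,j\le l} = s_\lambda(\phi),
\end{equation*}
which is the classical Jacobi-Trudi identity for the Schur function.

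Step three: orthonormality $\langle S_\lambda(\phi).1, S_\mu(\phi).1\rangle = \delta_{\lambda,\mu}$ then follows from the classical orthonormality of Schur functions under the Hall inner product. Alternatively and more intrinsically, the analogous OPE calculation for $S(\phi;z)$ and its adjoint $S^*(\phi;w)$ defined in \eqref{e:Schur2} produces the formal $\delta$-function commutator
\begin{equation*}
S(\phi;z)S^*(\phi;w) - S^*(\phi;w)S(\phi;z) = \delta(w/z),
\end{equation*}
after which one concludes by induction on $l(\lambda)$ together with the vanishing $S^*_n(\phi).1 = 0$ for $n > 0$. The main technical obstacle is the clean derivation of the two OPEs and the passage from the exchange relation to the Jacobi-Trudi determinant; once these are in place, both the Schur identification and the orthonormality are immediate.
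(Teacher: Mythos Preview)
The paper does not actually prove this proposition: it is quoted verbatim from \cite{Jing3} with no accompanying argument. Your sketch supplies the standard vertex-operator proof from that reference, and the main line (OPE $\Rightarrow$ exchange relation, $S_n(\phi).1=h_n(\phi)$, Jacobi--Trudi, then classical Schur orthonormality under the Hall inner product) is correct and complete in outline. This is exactly the argument the paper is implicitly invoking via the citation, and in fact the paper's Lemma~\ref{L:Jacobi} records the generating-function identity that makes your Step~2 precise.

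One small caution on your ``alternative'' in Step~3: the bare Bernstein operators $S(\phi;z)$ and $S^*(\phi;w)$ as defined in \eqref{e:Schur1}--\eqref{e:Schur2} do not include the zero-mode (charge-shift) factors, so their commutator is not literally the formal delta function $\delta(w/z)$; the clean fermionic anticommutator requires the full charged-fermion fields. Since your primary argument via Schur orthonormality already closes the proof, this does not affect correctness, but the alternative as stated would need the zero modes restored to be made rigorous.
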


 More precisely, we also need the following result from \cite{Jing3}.

\begin{lem}\cite{Jing3}\label{L:Jacobi} Let $S(z)$ be the vertex operator of the Schur function. For any partition $\lambda$,
 the coefficient of the Schur function $s_{\lambda}(p_r)$ is the
 $z^{\lambda}$-coefficient of
\begin{align*}
S(z_1)S(z_2)\cdots S(z_l).1=\prod_{1\le i<j\le l}(1-\frac{z_j}{z_i})\sum_{\mu\in \mathcal C_l}h_{\mu}(p_r)z^{\mu}
\end{align*}
where ${\mathcal C}_l$ is the set of compositions of weight $n$.
\end{lem}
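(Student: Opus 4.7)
The plan is to prove the operator product identity directly by the standard normal-ordering / Wick trick for free-boson vertex operators, and then read off the $z^\lambda$-coefficient. All manipulations take place in $\Lambda_{\mathbb C}(\phi)$ for a fixed $\phi\in\Phi^*$, so I shall suppress $\phi$ from the notation and use only the Heisenberg relation $[p_n,\partial/\partial p_m]=-\delta_{m,n}$.

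First, I would split $S(z)=S_+(z)S_-(z)$, where
\[
S_+(z)=\exp\!\Bigl(\sum_{n\geq 1}\tfrac1n p_n z^n\Bigr),\qquad S_-(z)=\exp\!\Bigl(-\sum_{n\geq 1}\tfrac{\partial}{\partial p_n}z^{-n}\Bigr).
\]
The first main computation is the contraction
\[
S_-(w)S_+(z)=(1-z/w)\,S_+(z)S_-(w),
\]
which follows from $e^Ae^B=e^Be^Ae^{[A,B]}$ once one checks (using $[\partial/\partial p_m,p_n]=\delta_{m,n}$) that
\[
\Bigl[-\sum_{m\geq 1}\tfrac{\partial}{\partial p_m}w^{-m},\ \sum_{n\geq 1}\tfrac1n p_n z^n\Bigr]=-\sum_{n\geq 1}\tfrac1n (z/w)^n=\log(1-z/w).
\]

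Second, I would move every $S_-(z_i)$ to the right of every $S_+(z_j)$ ($i<j$) in the product $S(z_1)\cdots S(z_l)=\prod_i S_+(z_i)S_-(z_i)$, picking up one factor $(1-z_j/z_i)$ for each pair $i<j$. Since $S_-(z).1=1$ (the exponential of annihilation operators acts trivially on the vacuum), I obtain
\[
S(z_1)\cdots S(z_l).1=\prod_{1\leq i<j\leq l}(1-z_j/z_i)\,S_+(z_1)\cdots S_+(z_l).1.
\]
The creation operators $S_+(z_i)$ commute with each other, so
\[
S_+(z_1)\cdots S_+(z_l).1=\prod_{i=1}^{l}\exp\!\Bigl(\sum_{n\geq 1}\tfrac1n p_n z_i^n\Bigr)=\prod_{i=1}^{l}H(z_i),
\]
where $H(z)=\sum_{n\geq 0}h_n z^n$ is the standard generating series of the complete symmetric functions in the $p_n$. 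Multiplying out the product gives $\prod_i H(z_i)=\sum_{\mu\in\mathcal C_l}h_{\mu}\,z^{\mu}$, which is exactly the claimed identity.

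Finally, for the identification with $s_\lambda$, I would extract the $z^\lambda=z_1^{\lambda_1}\cdots z_l^{\lambda_l}$-coefficient. Expanding $\prod_{i<j}(1-z_j/z_i)$ via the Vandermonde / antisymmetrization gives precisely the Jacobi-Trudi determinant $\det(h_{\lambda_i-i+j})$, so $[z^\lambda]S(z_1)\cdots S(z_l).1=s_\lambda(p)$, matching the orthonormal basis of the previous proposition. The only subtle point is a careful bookkeeping check that no divergences arise from the formal expansion $\log(1-z/w)=-\sum_n (z/w)^n/n$ when the product is applied to the vacuum; this is standard once one works inside $\mathcal F_G[[z_1,z_1^{-1},\dots,z_l,z_l^{-1}]]$ with the convention that $(1-z_j/z_i)$ is expanded in nonnegative powers of $z_j/z_i$, which is exactly the order $i<j$ that appears after moving $S_-$'s to the right. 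This bookkeeping is the only place where one could slip up; everything else is a direct BCH/Wick computation.
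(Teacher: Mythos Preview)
Your argument is correct and is precisely the standard normal-ordering computation for Bernstein-type vertex operators. Note, however, that the paper does not actually give its own proof of this lemma: it is stated with a citation to \cite{Jing3} (``we also need the following result from \cite{Jing3}''), so there is no in-paper proof to compare against. Your derivation---splitting $S(z)=S_+(z)S_-(z)$, computing the contraction $S_-(w)S_+(z)=(1-z/w)S_+(z)S_-(w)$ via BCH, normal ordering the product, killing the $S_-$ factors on the vacuum, and then recognizing $\prod_i H(z_i)=\sum_{\mu}h_{\mu}z^{\mu}$ followed by Jacobi--Trudi for the $z^{\lambda}$-coefficient---is exactly the argument one finds in \cite{Jing3} and is the expected proof.
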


For each weighted partition-valued function
$\tilde{\bl}: \Phi^*\longmapsto \mathcal P$ such that
$||\tilde{\bl}||=\sum_{\phi\in \Phi^*}d(\phi)|\bl(\phi)|<\infty$, we denote
\begin{align}
S_{\tilde{\bl}}=\prod_{\phi\in\Phi^*}s_{\tilde{\bl}(\phi)}(\phi)\in
\mathcal{F}_G.
\end{align}\label{inn,S}
Then
\begin{align}
\langle S_{\tilde{\bl}},S_{\tilde{\bmu}}\rangle=\delta_{\tilde{\bl}\tilde{\bmu}}.
\end{align}
In other words, $\{S_{\tilde{\bl}}\}$ form an orthonormal basis for $\mathcal{F}_G$ as well.

\begin{thm} \cite{B} \cite[Chap.IV.5]{Mac}\label{e,char}
    Let $\phi$ be an orbit of $M^*$ and $n\geq 0$, then $e_n(\phi)$ is the characteristic  of a character of $G_{nd(\phi)}$.
\end{thm}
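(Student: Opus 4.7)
The plan is to identify $\mathrm{ch}^{-1}(e_n(\phi))$ as a proper irreducible character by combining the general virtuality provided by Brauer's theorem with an explicit sign determination. Since $e_n(\phi) = s_{(1^n)}(\phi) = S_{(1^n)}(\phi).1$ lies in the orthonormal system $\{S_{\tilde{\bl}}\}$ produced by the Schur vertex operators on $\mathcal{F}_G$, the preimage $\mathrm{ch}^{-1}(e_n(\phi))$ has unit norm, and Brauer's theorem on modular characters (already invoked in the Introduction to establish virtuality of every $S_{\tilde{\bl}}$) implies that it is a virtual character of $G_{nd(\phi)}$. Hence it equals $\pm$ an irreducible character, and the content of the theorem is to rule out the minus sign.

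To pin the sign, I would compute the value of $\mathrm{ch}^{-1}(e_n(\phi))$ at the identity element of $G_{nd(\phi)}$ and verify it is a positive integer. The identity corresponds to the weighted partition-valued function $\bl_{\mathrm{id}}$ with $\bl_{\mathrm{id}}(t-1) = (1^{nd(\phi)})$ and empty elsewhere. Using the isometry of $\mathrm{ch}$ together with Theorem \ref{thm:ch}, the degree equals (up to a known power of $q$ coming from the factor $q_f^{-n(\bl(f))}$) the inner product
\[
\bigl\langle e_n(\phi),\ e_{nd(\phi)}(t-1)\bigr\rangle,
\]
which can be evaluated by expanding both elementary symmetric functions in power sums and applying the cross-pairing \eqref{e:commutator1} between the two Heisenberg systems, with the value $(\phi, t-1)_m = 1$ for all admissible $m$. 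I expect this calculation to collapse to a positive polynomial in $q$ that one recognizes as the Green-type degree formula for a basic character.

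Alternatively, one can bootstrap from the case $n=1$: having shown that $\mathrm{ch}^{-1}(p_1(\phi)) = \varepsilon_\phi\chi_\phi$ for an irreducible cuspidal $\chi_\phi$ of $G_{d(\phi)}$ by the same Brauer argument and a single degree evaluation, the identity $e_n(\phi) = s_{(1^n)}(\phi)$ in the Fock space realizes $\mathrm{ch}^{-1}(e_n(\phi))$ as a Schur-functor-type combination of $\chi_\phi$'s, which coincides under the characteristic map with the $S_n$-sign-isotypic component of $\mathrm{Ind}_{P_{(d,\ldots,d)}}^{G_{nd}}(\chi_\phi^{\boxtimes n})$. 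Since sign-isotypic pieces of honest representations are honest representations, positivity of the degree of $\mathrm{ch}^{-1}(e_n(\phi))$ follows for all $n$ once the base case is handled.

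The main obstacle is controlling the global sign: while Brauer gives virtuality and the orthonormality from \eqref{def:inn} gives $\pm$-irreducibility essentially for free, pinning the sign requires an actual positive degree computation. In the classical approach of \cite[Ch.~IV.5]{Mac} this is handled via Brauer's theorem on $p$-elementary subgroups together with an explicit cuspidal induced-character calculation. In the present vertex-operator framework the cross-pairing \eqref{e:commutator1} and the Fourier transform machinery of Section 3 should make this calculation more transparent, reducing the verification to a tractable combinatorial identity in $q$ rather than to Deligne--Lusztig induction.
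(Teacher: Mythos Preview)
The paper does not prove this theorem at all: it is stated with citations to \cite{B} and \cite[Chap.~IV.5]{Mac} and used as a black box. In the paper's logical architecture, Theorem~\ref{e,char} is the \emph{input} that makes the rest work: from it one deduces that every $S_{\tilde{\bl}}$ is the image of a virtual character (since Schur functions are $\mathbb{Z}$-linear combinations of the $e_n(\phi)$), and only then does the degree computation in Section~5 pin down signs.

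Your proposal is therefore circular. You write that ``Brauer's theorem on modular characters (already invoked in the Introduction to establish virtuality of every $S_{\tilde{\bl}}$) implies that it is a virtual character.'' But the Introduction's invocation of Brauer is a forward reference to exactly this theorem; it is not an independent source of virtuality. Brauer's characterization theorem does not say that an arbitrary norm-one class function is virtual---one must verify a condition on all elementary subgroups, and that verification (carried out in \cite[Chap.~IV.5]{Mac} via an explicit construction of the basic characters and their values on $p$-regular classes) is the entire content of the result you are trying to prove. Your alternative bootstrap from $n=1$ has the same defect: showing that $\mathrm{ch}^{-1}(p_1(\phi))$ is $\pm$ an irreducible character already presupposes that it is a virtual character, which again is what is at stake. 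The degree computation you outline is fine and indeed parallels Section~5, but it only resolves a sign once virtuality is already in hand; it cannot establish virtuality on its own.
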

Each Schur function $s_{\tilde{\bl}(\phi)}(\phi)$ is a linear combination of $e_n(\phi)'s$ with integer coefficients,  so $S_{\tilde{\bl}}$ is the  characteristic of a virtual character $\chi^{\tilde{\bl}}$  of $G_n$ by Theorem \ref{e,char}. It follows from  Theorem \ref{thm:ch} that
\begin{align*}
    \langle \chi^{\tilde{\bl}},\chi^{\tilde{\bmu}}\rangle=\delta_{\tilde{\bl}\tilde{\bmu}},
\end{align*}
so $\pm \chi^{\tilde{\bl}}$ must be an irreducible character of $G_n$.
It is known that the number of the irreducible characters of $G_n$ is equal to the conjugacy classes of $G_n$, which is equal to the number of partition value functions $\tilde{\bl}: \Phi^*\to\mathcal{P}$ such that $\|\tilde{\bl}\|=n$. Hence $\{\pm \chi^{\tilde{\bl}}\}$ form  a complete set of irreducible characters of $G_n$, where $||\tilde{\bl}||=n$.

\section{Computation of
characters of $\GL_n(\mathbb F_q)$}

Let's write  $\chi^{\tilde{\bl}}$ as follows (which is yet to be shown a proper character):
\begin{align}\label{e:expansion}
\chi^{\tilde{\bl}}=\sum_{\boldsymbol{\mu}}\chi^{\tilde{\bl}}_{\boldsymbol{\mu}}\pi_{\boldsymbol{\mu}}
\end{align}
summed all over conjugacy classes $\boldsymbol{\mu}$. We claim that
$\chi^{\tilde{\bl}}$ is an irreducible character by showing the coefficient of $\pi_{(1^n)}$ being positive. Applying the characteristic map, \eqref{e:expansion} can be written as
\begin{align}
    S_{\tilde{\bl}}=\sum_{\bmu}\chi_{\bmu}^{\tilde{\bl}}(\prod_{f\in\Phi}q_f^{-n(\bmu(f))}P_{\bmu(f)})
\end{align}
then
\begin{align*}
    \chi_{(1^n)}^{\Tilde{\bl}}=q^{n(1^n)}\langle S_{\tilde{\bl}}, Q_{(1^n)(f_1)}\rangle
\end{align*}
which is denoted as $d(\tilde{\bl})$.

Notice that $q^{n(1^n)}Q_{(1^n)(f_1)}=q^{n(1^n)+|(1^n)|}b_{n}(q^{-1})P_{(1^n)}(f_1)=\psi_n(q)e_n(f_1)$, where $\psi_n(q)=\prod_{i=1}^n(q^i-1)$. Therefore 
\begin{align*}
d(\tilde{\bl})&=\psi_n(q)\langle S_{\tilde{\bl}},e_n(f_1)\rangle\\
&=\psi_n(q)\langle \prod_{\phi\in\Phi^*}S_{\tilde{\bl}(\phi)_1}S_{\tilde{\bl}(\phi)_2}\ldots S_{\tilde{\bl}(\phi)_l}.1, E_n(f_1).1\rangle.
\end{align*}

Note that $\langle \prod_{\phi\in\Phi^*}S_{\tilde{\bl}(\phi)_1}\ldots S_{\tilde{\bl}(\phi)_l}.1, E_n(f_1).1\rangle$ is the coefficient of $z^{\tilde{\bl}(\phi)}w^n$ in the following inner product:
\begin{align*}
&\langle \prod_{\phi\in\Phi^*}S(\phi; z_1)S(\phi; z_2)\ldots S(\phi; z_l).1, E(w;f_1).1\rangle\\
&=\prod_{\phi\in\Phi^*}\prod_{i<j}(1-\frac{z_j}{z_i})\left\langle \exp\left(\sum_{m=1}^{\infty}\frac{p_m(\phi)}m(z_1^m+\cdots +z_l^m)\right),
\exp\left(-\sum_{n=1}^{\infty}\frac{p_n(f_1)}{n}(-w)^n\right)\right\rangle
\end{align*}
where we have used Lemma \ref{L:Jacobi}.
Moving the left exponential operator to the right and using that
$[p_m^*(\phi), p_n(f_1)]=\frac{(-1)^{n-1}n}{q_{\phi}^m-1}\delta_{md(\phi), n}$ (see \eqref{e:comm}), the above is simplified to
\begin{align*}
&\prod_{\phi\in\Phi^*}\prod_{i<j}(1-\frac{z_j}{z_i})\prod_{j=1}^l\exp\left(\sum_{m=1}^{\infty}\frac{1}{m(q_{\phi}^m-1)}(z_jw^{d(\phi)})^m\right)\\
&=\prod_{\phi\in\Phi^*}\prod_{i<j}(1-\frac{z_j}{z_i})\prod_{j=1}^l\prod_{m=1}^\infty\prod_{r=0}^{\infty}\exp\left(\frac{q_{\phi}^{-(r+1)m}}m(z_jw^{d(\phi)})^m\right)\\
&=\prod_{\phi\in\Phi^*}\prod_{i<j}(1-\frac{z_j}{z_i})\sum_{k=0}^{\infty}\sum_{m_1, \ldots, m_l}h_{(m_1, \ldots, m_l)}(q_{\phi}^{-1}, q_{\phi}^{-2}, \ldots)z_1^{m_1}z_2^{m_2}\cdots z_l^{m_l}w^{kd(\phi)}.
\end{align*}
Then it follows from the vertex realization of Schur function
\cite{Jing3} and Lemma \ref{L:Jacobi}
that the coefficient of $z^{\tilde{\bl}}w^n$ is $\prod_{\phi\in\Phi^*}s_{\tilde{\bl}(\phi)}(q_{\phi}^{-1}, q_{\phi}^{-2}, \ldots)$, where $||\tilde{\bl}||=\sum_{\phi}d(\phi)\sum_im_i=n$. By Littlewood's formula
\cite[Ex.2, p.45]{Mac} we have
\begin{align}\label{e:degree2}
d(\tilde{\bl})=\psi_n(q)\prod_{\phi\in \Phi^*}\left(q_{\phi}^{n(\tilde{\bl}(\phi)')}\prod_{x\in\tilde{\bl}(\phi)}(q_{\phi}^{h(x)}-1)^{-1}\right)
\end{align}
where $h(x)$ is the Hook length of the node $x$ inside the Young diagram of shape $\tilde{\bl}(\phi)$.

The formula \eqref{e:degree2} was first obtained by Green
using a different method \cite{jaG} (see also Macdonald's account \cite[pp.284-286]{Mac}).
Since $d(\tilde{\bl})$ is positive, we know that $S_{\tilde{\bl}}$ must correspond to the irreducible character labeled by $\tilde{\bl}$ under the characteristic map.

By the relation \eqref{R:P,Q} and the vertex operator realization, we immediately have
the following result.

\begin{thm}\label{T:irred} For any weighted $\Phi^*$-colored partition
$\tilde{\bl}:\Phi^*\rightarrow \mathcal{P}$ and $\Phi$-colored
partition $\tilde{\bmu}:\Phi\rightarrow {\mathcal P}$ such that
\begin{equation*}
\sum_{\phi\in\Phi^*} d(\phi)|\bl(f)|=\sum_{f\in\Phi} d(f)|\bmu(f)|=n
\end{equation*}
the character value $\chi^{\tilde{\bl}}_{\bmu}$ of the irreducible character $\chi^{\tilde{\bl}}$ of $\GL_n(\mathbb F_q)$ at the conjugacy class $c_{\bmu}$ is given by the matrix coefficient of vertex operators:
\begin{align}\label{e:char}
\chi^{\tilde{\bl}}_{\bmu}&
=\left\langle \prod_{\phi\in\Phi^*}S_{\tilde{\bl}(\phi)}.1, \prod_{f\in\Phi}q_f^{n(\bmu(f))}Q_{\bmu(f)}.1\right\rangle.
\end{align}
\end{thm}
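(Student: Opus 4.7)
The plan is to read off the character values by applying the characteristic map $\mathrm{ch}$ from Theorem \ref{thm:ch} to the expansion \eqref{e:expansion} and then taking inner products against a carefully chosen dual basis. Since $\{\pi_{\bmu}\}$ is a basis of $R_G$ and $\chi^{\tilde{\bl}}$ is a proper irreducible character (as established just before the statement via the positivity of $d(\tilde{\bl})$ in \eqref{e:degree2}), the coefficients $\chi^{\tilde{\bl}}_{\bmu}$ in \eqref{e:expansion} are exactly the character values at the classes $c_{\bmu}$.

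First I would apply $\mathrm{ch}$ to \eqref{e:expansion}. By Theorem \ref{thm:ch} this yields
\begin{equation*}
S_{\tilde{\bl}} \;=\; \sum_{\bmu}\chi^{\tilde{\bl}}_{\bmu}\,\mathrm{ch}(\pi_{\bmu}) \;=\; \sum_{\bmu}\chi^{\tilde{\bl}}_{\bmu}\,\prod_{f\in\Phi}q_f^{-n(\bmu(f))}P_{\bmu(f)}(f).1.
\end{equation*}
Next I would pair both sides with the vector $\prod_{f\in\Phi}q_f^{n(\bmu(f))}Q_{\bmu(f)}(f).1$ using the inner product on $\mathcal{F}_G$. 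Since this inner product is the tensor product of the inner products on the factors $\Lambda_{\mathbb C}(f)$ (as $\mathcal{F}_G = \bigotimes_{f\in\Phi}\Lambda_{\mathbb C}(f)$), the pairing factors componentwise over $f\in\Phi$.

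The crucial ingredient is the duality established in the paper: by \eqref{e:norm} and the definition \eqref{e:chb} of $P_{\lambda}(f)$ one has $\langle P_{\lambda}(f).1, Q_{\mu}(f).1\rangle = \delta_{\lambda,\mu}$, which globalizes to $\langle P_{\bnu}, Q_{\bmu}\rangle = \delta_{\bnu,\bmu}$. Therefore on the right-hand side only the term $\bnu=\bmu$ survives, and the factors $q_f^{-n(\bmu(f))}$ coming from the characteristic map are exactly cancelled by the compensating factors $q_f^{n(\bmu(f))}$ inserted into the test vector. What remains is simply $\chi^{\tilde{\bl}}_{\bmu}$, which gives the desired identity \eqref{e:char}.

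No serious obstacle is expected in this argument; it is essentially bookkeeping once Theorem \ref{thm:ch} and the duality of the $P$- and $Q$-bases are in hand. The only point that requires some care is the correct normalization by the powers $q_f^{n(\bmu(f))}$ on the right, which is precisely what is needed to absorb the normalizing constants introduced by the characteristic map. With this choice the formula exhibits the character values as matrix coefficients of the Bernstein--Schur vertex operators $S_n(\phi)$ on the bra side and the Hall--Littlewood vertex operators $Q_n(f)$ on the ket side, which is the main conceptual content of the theorem.
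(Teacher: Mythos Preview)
Your proposal is correct and follows essentially the same route as the paper: apply $\mathrm{ch}$ to the expansion \eqref{e:expansion} to get $S_{\tilde{\bl}}=\sum_{\bmu}\chi^{\tilde{\bl}}_{\bmu}\prod_{f}q_f^{-n(\bmu(f))}P_{\bmu(f)}.1$, and then use the duality $\langle P_{\bnu},Q_{\bmu}\rangle=\delta_{\bnu,\bmu}$ to isolate the coefficient. The paper in fact carries out exactly this computation for the special class $(1^n)$ just before the theorem and then records the general case as an immediate consequence.
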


The formula \eqref{e:char} was first obtained in
a different setting by Green \cite{jaG} and accounted in \cite[Ch. IV (6.8)]{Mac}. Some application of lower degree computations was included in \cite{Mor}. Now we describe how to compute the irreducible characters.

By the Frobenius character formula \cite[I.7]{Mac} or \cite[Ch.7.18]{St}, the irreducible character $\overline{\chi}^{\lambda}$
of the symmetric group $\mathfrak S_{|\lambda|}$ is given by:
\begin{equation}\label{e:Frob}
S_{\lambda}(\phi).1=\sum_{\rho}\frac{\overline{\chi}^{\lambda}_{\rho}}{z_{\rho}}p_{\rho}(\phi)
\end{equation}
summed over partitions $\rho\vdash |\lambda|$.

For a fixed $\tilde{\bl}: \Phi^*\longrightarrow \mathcal P$ with $||\bl||=n$, we define the following product for $\tilde{\brho}: \Phi^*\longrightarrow \mathcal P$ such that $||\tilde{\brho}||=n$
\begin{align}\label{e:char-sym}
\overline{\chi}^{\tilde{\bl}}_{\tilde{\brho}}&=\prod_{\phi, \phi'\in\Phi^*}\overline{\chi}^{\tilde{\bl}(\phi)}_{\tilde{\brho}(\phi')}.
\end{align}

Since both $\{q_f^{-|\la|}Q_{{\lambda}}(f).1|\lambda\in\mathcal P\}$ and the power-sums $\{p_{\rho}(f)|\rho\in\mathcal P\}$ are linear bases of the ring $\Lambda_{\mathbb C(q_f)}(f)$
of symmetric functions, we can write for $\lambda, \rho\vdash m$
\begin{equation}\label{e:Green}
q_f^{-|\la|}Q_{{\lambda}}(f).1=\sum_{\rho}\frac{X^{{\lambda}}_{\rho}(q_f^{-1})}{z_{\rho}(q_f^{-1})}p_{\rho}(f),
\end{equation}
where the (modified) coefficients 
$X^{{\lambda}}_{\rho}(q_f^{-1})$ are the so-called {\it Green polynomials} \cite[III.(7.5)]{Mac}.
The equation can also be viewed
as deformation of \eqref{e:Frob} combinatorially.

We define for $\Phi$-colored partitions $\brho, \bl: \Phi \longrightarrow \mathcal P$
\begin{align}\label{e:prod-gr}
\hat{X}^{\bl}_{\brho}&=\prod_{f, f'\in\Phi}q_f^{n(\bl(f))+|\bl(f)|}X^{\bl(f)}_{\brho(f')}(q_{f'}^{-1})\\
Z_{\brho}(q^{-1})&=\prod_{f\in\Phi}z_{\brho(f)}(q_f^{-1})
\end{align}

\begin{thm} For each pair of weighted $\Phi^*$-colored partition $\tilde{\bl}$ and weighted $\Phi$-colored partition $\bmu$ with $||\tilde{\bl}||=||\bmu||=n$, the irreducible
character value $\chi^{\tilde{\bl}}_{\bmu}$ of $\GL(n, \mathbb F_q)$ is given by
\begin{align}\label{e:charfor}
\chi^{\tilde{\bl}}_{\bmu}=
\sum_{\brho}\frac{\overline{\chi}^{\tilde{\bl}}_{{\brho}}\hat{X}^{{\bmu}}_{\brho}}{z_{\brho}} (-1)^{||\brho||-l(\brho)}q^{-||\brho||}\prod_{\phi\in\Phi^*}d(\phi)^{l(\brho(\phi))}\prod_{i=1}^{l(\brho(f(\phi)))}(\phi, f(\phi))_{d(\phi)\brho_i(\phi)}
\end{align}
summed over all $\Phi^*$-colored weighted partitions $\brho$ such that $||\brho||=\sum_{\phi\in\Phi^*}d(\phi)|\brho(\phi)|=n$,
and $\overline{\chi}^{\tilde{\bl}}_{\brho}$ are the products of the irreducible characters of the Young subgroups $\prod_{f\in\Phi}{\mathfrak S}_{\brho(f)}$ of the symmetric group $\mathfrak S_{\sum_{\phi\in\Phi^*}|\brho(f)|}$ and
$\hat{X}^{{\bmu}}_{\brho}$ are products of the Green polynomials, and $f(\phi)\in\Phi$ is the corresponding orbit of $\phi\in\Phi^*$.
\end{thm}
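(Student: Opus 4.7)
The starting point is Theorem~\ref{T:irred}, which expresses
\[
\chi^{\tilde{\bl}}_{\bmu} = \left\langle \prod_{\phi\in\Phi^*} S_{\tilde{\bl}(\phi)}.1,\ \prod_{f\in\Phi} q_f^{n(\bmu(f))} Q_{\bmu(f)}.1 \right\rangle,
\]
and the plan is to expand both factors in power-sum bases (one for each of the two Heisenberg realizations of $\mathcal F_G$) and then pair them via Wick's theorem. For the left factor, the Frobenius character formula \eqref{e:Frob} gives $S_{\tilde{\bl}(\phi)}.1 = \sum_{\rho} \overline{\chi}^{\tilde{\bl}(\phi)}_{\rho}\,p_{\rho}(\phi)/z_{\rho}$; the product over $\phi\in\Phi^*$ regroups into a sum over $\Phi^*$-colored partitions $\brho$ with coefficient $\overline{\chi}^{\tilde{\bl}}_{\brho}/z_{\brho}$ as in \eqref{e:char-sym}. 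For the right factor, Green's expansion \eqref{e:Green}, combined with the normalization $Q_{\bmu(f)}.1 = q_f^{|\bmu(f)|}\sum_\sigma X^{\bmu(f)}_\sigma(q_f^{-1})\,p_\sigma(f)/z_\sigma(q_f^{-1})$, gives after taking the product over $f$ a sum over $\Phi$-colored partitions $\boldsymbol{\sigma}$ with coefficient $\hat{X}^{\bmu}_{\boldsymbol{\sigma}}/Z_{\boldsymbol{\sigma}}(q^{-1})$.

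The inner product then reads
\[
\chi^{\tilde{\bl}}_{\bmu} = \sum_{\brho,\boldsymbol{\sigma}} \frac{\overline{\chi}^{\tilde{\bl}}_{\brho}\,\hat{X}^{\bmu}_{\boldsymbol{\sigma}}}{z_{\brho}\,Z_{\boldsymbol{\sigma}}(q^{-1})}\,\langle p_{\brho}, p_{\boldsymbol{\sigma}}\rangle.
\]
The cross-pairing $\langle p_{\brho},p_{\boldsymbol{\sigma}}\rangle$ is computed by Wick's theorem: repeatedly commuting the annihilation operators $p_m^*(\phi)$ past the generators $p_n(f)$ via \eqref{e:commutator2} reduces it to a sum over bijections between the multiset of parts of $\brho$ and that of $\boldsymbol{\sigma}$ of products of the elementary pairings
\[
\langle p_m(\phi),p_n(f)\rangle = (-1)^{md(\phi)-1}\,\frac{md(\phi)\,(\phi,f)_{md(\phi)}}{q_\phi^m-1}\,\delta_{md(\phi),\,nd(f)}
\]
from \eqref{e:commutator1}. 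The delta constraint forces each contraction to pair parts of equal total degree $md(\phi)=nd(f)$.

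The final step is to collapse the double sum $\sum_{\brho,\boldsymbol{\sigma}}$ to a single sum over $\brho$. For fixed $\brho$, the sum over $\boldsymbol{\sigma}$ and the Wick bijections is reorganized so that the multiplicity denominators in $Z_{\boldsymbol{\sigma}}(q^{-1})$ absorb the overcounting of bijections yielding the same $\boldsymbol{\sigma}$; the factors $(q_\phi^{\brho_i(\phi)}-1)^{-1}$ from the elementary pairings, together with the powers of $q_f$ packaged in $\hat{X}^{\bmu}_{\boldsymbol{\sigma}}$, combine to the uniform scalar $q^{-||\brho||}$; the signs $(-1)^{d(\phi)\brho_i(\phi)-1}$ compound to $(-1)^{||\brho||-l(\brho)}$; each pairing contributes one $d(\phi)$, giving $\prod_\phi d(\phi)^{l(\brho(\phi))}$; and the $(\phi,f)_{d(\phi)\brho_i(\phi)}$ factors assemble into $\prod_i(\phi,f(\phi))_{d(\phi)\brho_i(\phi)}$ once the degree-matching orbit correspondence $\phi\mapsto f(\phi)$ is fixed.

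The main obstacle will be this last combinatorial reorganization: showing that $Z_{\boldsymbol{\sigma}}(q^{-1})^{-1}$ cancels the Wick overcounting, and that the heterogeneous $q_f$-powers coming from $\hat{X}^{\bmu}_{\boldsymbol{\sigma}}$ together with the Heisenberg commutators conspire to the uniform $q^{-||\brho||}$. Identifying the orbit correspondence $f(\phi)$ implicit in the degree constraint $\delta_{md(\phi),nd(f)}$ and re-indexing the surviving terms by $\brho$ alone is the crux; the Frobenius/Green expansions and the Wick evaluation itself are routine given \eqref{e:commutator1} and \eqref{e:commutator2}.
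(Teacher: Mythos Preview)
Your proposal is essentially the paper's proof: expand both factors in power sums via \eqref{e:Frob} and \eqref{e:Green}, then evaluate the cross-pairing $\langle p_{\tilde{\brho}},p_{\brho'}\rangle$ by iterating the commutation relations \eqref{e:commutator1}--\eqref{e:commutator2}, exactly the Wick computation you describe. The one correction is that the correspondence $\phi\mapsto f(\phi)$ is \emph{not} implicit in the degree constraint $\delta_{md(\phi),nd(f)}$ (distinct $F$-orbits in $\Phi$ can share a degree); the paper fixes it externally by choosing a primitive $(q^{n!}-1)$th root of unity to realize $M_m^*\simeq M_m$ as cyclic groups for every $m\mid n$, obtaining a degree-preserving bijection $\Phi_n^*\leftrightarrow\Phi_n$, and with this bijection in hand the pairing is computed one $(\phi,f(\phi))$-block at a time, which collapses the double sum to a single sum over $\brho$ without the general Wick reorganization you anticipate as the main obstacle.
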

\begin{proof} By Theorem \ref{T:irred} it follows that the irreducible characters are given by
\begin{align*}
\chi^{\tilde{\bl}}_{\bmu}
&=\left\langle \prod_{\phi\in\Phi^*}S_{\tilde{\bl}(\phi)}.1, \prod_{f\in\Phi}q_f^{n(\bmu(f))}Q_{\bmu(f)}.1\right\rangle\\
&=\left\langle \sum_{\tilde{\brho}}\overline{\chi}^{\tilde{\bl}}_{\tilde{\brho}}\frac{p_{\tilde{\brho}}}{z_{\tilde{\brho}}},
\sum_{\brho'}\frac{\hat{X}^{{\bmu}}_{\brho'}}{Z_{\brho'}(q^{-1})}p_{\brho'}          \right\rangle\\
&=\sum_{\tilde{\brho}, \brho'}\frac{\overline{\chi}^{\tilde{\bl}}_{\tilde{\brho}}\hat{X}^{{\bmu}}_{\brho'}}{z_{\tilde{\brho}}Z_{\brho'}(q^{-1})} \langle p_{\tilde{\brho}}, p_{\brho'}\rangle
\end{align*}
One can pick a primitive $(q^{n!}-1)$th root of unity $\omega$, so $\omega_m=\omega^{\frac{q^{n!}-1}{q^m-1}}$ is the primitive $(q^m-1)$th root of unity and
$M_m=\langle \omega_m\rangle$, therefore the irreducible character $\chi_m$ given by $\chi_m(\omega_m)=\omega_m$ will generate all irreducible linear characters of $M_m^*$, i.e. $M_m^*=\langle\chi_m\rangle\simeq \langle \omega_m\rangle=M_m$ as cyclic groups.
This implies that
there exists a one-one correspondence between $F$-orbits of $M_n$ and $M_n^*$, subsequently there exists a one-to-one correspondence
between the weighted $\Phi^*$-colored partitions of $n$ and the weighted $\Phi$-colored partitions of $n$
such that $\tilde{\bl}$ and $\bmu$ match in the sense that $d(\phi)\bl(\phi)=d(f)\bmu(f)$. Moreover, for fixed degree $d(\phi)=d(f)$, the correspondence is also
one-to-one. It follows from \eqref{e:commutator2} that for two
partitions $\lambda=(\lambda_1, \ldots, \lambda_l)$ and
$\mu=(\mu_1, \ldots, \mu_k)$
\begin{align*}
&\langle p_{\lambda_1}(\phi)\cdots p_{\lambda_l}(\phi), p_{\mu_1}(f)\cdots p_{\mu_k}(f)\rangle\\
&=(-1)^{d(\phi)\lambda_1-1}\frac{\lambda_1d(\phi)(\phi, f)_{\lambda_1d(\phi)}}
{q^{\lambda_1}_{\phi}-1}\langle p_{\lambda_2}(\phi)\cdots p_{\lambda_l}(\phi), p_{\mu_2}(f)\cdots p_{\mu_k}(f)\rangle \delta_{\lambda_1d(\phi), \mu_1d(f)}\\
&=\cdots =\delta_{d(\phi)\lambda, d(f)\mu}(-1)^{d(\phi)|\lambda|-l(\lambda)}z_{\lambda}(q_{\phi}^{-1})q^{-d(\phi)|\lambda|}d(\phi)^l\prod_{i=1}^l(\phi, f)_{\lambda_id(\phi)}.
\end{align*}
Taking account of degree matching of $F$-orbits, it concludes that
$$\langle p_{\tilde{\brho}}, p_{\brho'}\rangle=\delta_{\tilde{\brho}, \brho'}(-1)^{||\brho'||-l(\brho')}Z_{\brho'}(q^{-1})q^{-||\brho'||}\prod_{\phi\in\Phi^*}d(\phi)^{l(\brho'(\phi))}\prod_{i=1}^{l(\brho'(\phi))}(\phi, f)_{d(\phi)\brho'_i(\phi)},$$
then the formula \eqref{e:charfor} of the irreducible character $\chi^{\tilde{\bl}}$ follows (replacing $\brho'$ by $\brho$).
\end{proof}

\begin{rem} The modified Green polynomials $X^{\lambda}_{\mu}(t)$ for $t=q^{-1}$ can be effectively computed by the method of vertex operators \cite{JL}, where the reader can find the Murnaghan-Makayama formula and various compact formulas.
We also note that the irreducible character $\overline{\chi}^{\lambda}_{\mu}=X^{\lambda}_{\mu}(0)$, which is also conveniently obtained by the vertex operator calculus \cite{JL} with $t=0$.
\end{rem}

\bigskip

\bigskip
\bibliographystyle{plain}

\begin{thebibliography}{6}

\bibitem{B} R.~Brauer,
{\it On the characters of finite groups}, Ann. Math. 62 (1955), 1--7.


\bibitem{C} R.~W.~Carter, {\it Finite groups of Lie type}, Wiley, New York, 1985.


\bibitem{FJW} I.~B.~Frenkel, N.~Jing, W.~Wang, {\it Vertex repersentation via finite groups and the McKay correspondence}, Internat. Math. Res. Notices, 4 (2000), 195-222.

\bibitem{DL} P.~Deligne, G.~Lusztig,
{\it Representations of reductive groups over finite fields}, Ann. Math. (2) 103 (1976), 103-161.

\bibitem{GM} M.~Geck, G.~Malle, {\em The character theory of finite groups of Lie type, a guided tour}, Cambridge University Press, 2020.

\bibitem{jaG} J.~A.~Green, {\it The characters of the finite general linear groups}, Trans. Amer. Math. Soc. 80 (1955) 402-447.

\bibitem{Green2} J.~A.~Green, {\it Hall algebras, heriditary algebras and quantum groups}, Invent. Math. 120 (1995), 361-377.

\bibitem{IJS} F.~Ingram, N.~Jing, E.~Stitzinger, {\it Wreath product symmetric functions}, Int. J. Algebra 3 (2009), no.1-4, 1-19.

\bibitem{Isa} I.~M.~Issacs, {\it Algebra}, Wadsworth Inc, 1994.

\bibitem{Jing1} N.~Jing, {\it Vertex operators, symmetric functions, and the spin group $\Gamma_n$}, J. Algebra 138 (1991), 340-398.

\bibitem{Jing2} N.~Jing, {\it Vertex operators and Hall-Littlewood symmetric functions}, Adv. Math. 87 (1991), 226-248.

\bibitem{Jing3} N.~Jing, {\it Symmetric polynomials and $U_q(\widehat{sl_2})$},
Represent. Theory 4 (2000), 46-63.

\bibitem{JL} N.~Jing, N.~Liu, {\it The Green polynomials via vertex operators}, J. Pure Appl. Algebra 226 (2022), 107032, 17pp. 

\bibitem{L} G.~Lusztig, {\it  Characters of reductive groups over a finite field},
Annals of Mathematics Studies, 107. Princeton, New Jersey: Princeton University Press, 1984.

\bibitem{Mac}  I.~G.~Macdonald, {\it Symmetric functions and Hall polynomials}, Oxford University Press, 1998.

\bibitem{Mor} A.~O.~Morris, The characters of the group $\GL_n(\mathbb F_q)$, Math. Zeit. 81 (1963),
112-123.

\bibitem{Se} J.~P. Serre, {\it Linear representations of finite groups}, Springer Science, 1977.

\bibitem{S} T.~A.~Springer, {\it Characters of special groups}, Springer Lecture Notes 131 (1970), 121-156.

\bibitem{SZ} T.~A.~Springer, A.V. Zelevinsky, {\it Characters of $\GL(n,\mathbb{F}_q)$ and Hopf algebras}, J. London Math.
Soc. 30 (1984), 27-43.

\bibitem{St} R.~P.~Stanley, {Enumerative combinatorics}, Vols. 1 \& 2, Cambridge University Press, 2001.

\bibitem{Ze} A.~V.~Zelevinsky, {\it Representations of finite classical groups: a Hopf algebra approach}. Springer Lecture Notes, vol. 869, 1981.



\end{thebibliography}

\end{document}